\newtheorem{thm}{Theorem}
\newtheorem{lemma}[thm]{Lemma}
\newtheorem{cor}[thm]{Corollary}
\newtheorem{prop}[thm]{Proposition}
\newcommand{\V}{V_{\infty}}
\DeclareMathOperator{\dist}{dist}
\begin{document}
\title{Infinity-Harmonic Potentials and Their Streamlines}
\author{Erik Lindgren, Peter Lindqvist}

\date{\today}
\maketitle

\medskip
    {\small \textsc{Abstract:} \textsf{
We consider certain solutions of the Infinity-Laplace Equation in planar convex rings. Their ascending streamlines are unique while the descending ones may bifurcate. We prove that bifurcation occurs in the generic situation and as a consequence, the solutions cannot have Lipschitz continuous gradients.}
      
\bigskip 

{\small \textsf{AMS Classification 2010}: 49N60, 35J15, 35J60, 35J65, 35J70.} 

{\small \textsf{Keywords}: Infinity-Laplace Equation, streamlines, convex rings, infinity-potential function}
\section{Introduction}

The solutions of the celebrated $\infty$-Laplace Equation
$$
 \Delta_{\infty}u\,\equiv\,\sum_{i,j}\frac{\partial u}{\partial x_i} \frac{\partial u}{\partial x_j}\frac{\partial^2 u}{\partial x_i \partial x_j}\,=\,0,
 $$
 which is the formal limit of the $p$-Laplace Equations
$$
    \Delta_p u\,\equiv\,  \nabla\! \cdot \! (|\nabla
    u|^{p-2}\nabla u)\,=\,0
  $$
  as $p \to \infty$,  have many fascinating properties. The solutions provide the best Lipschitz extension of their boundary values (see \cite{A1}) and the equation appears even in Stochastic Game Theory (see \cite{PSW}).

  A characteristic feature for classical solutions is that \emph{the speed} $|\nabla u|$ \emph{is constant along a streamline}, which is a useful property for applications to image processing, see \cite{CMS}. Indeed, along the streamline $x = x(t)$ with the equation
  $$
  \frac{d x}{d t}\,=\, \nabla u(x(t))
  $$
  we should have
  $$\frac{d \,}{d t}|\nabla u(x(t))|^2\,
  %=\, 2 \sum\frac{\partial u}{\partial x_i} \frac{\partial^2 u}{\partial x_i \partial x_j}\frac{d x_j}{d t}\,
  =\, 2 \Delta_{\infty}u(x(t))\,=\,0
  $$
  so that
  $$|\nabla u(x(t))| \,=\, \text{constant}.$$
  However, the calculation requires second partial derivatives. We shall see that this interpretation of constant speed often fails.

  The solutions of the $\infty$-Laplace Equation, the so-called $\infty$-\emph{harmonic functions}, are defined in the viscosity sense as in \cite{J}, \cite{JLM} and \cite{S}. They are continuous and even differentiable. O. Savin \cite{S} has proved that in the plane their gradient is continuous and even locally H\"{o}lder continuous, according to \cite{ESa}. Thus the solutions are of class $C^{1,\alpha}_{loc}$ in the two dimensional case. In \cite{KZZ} the speed $|\nabla u|$ is shown to belong to a Sobolev space. In higher dimensions the gradient exists (in the classical sense) at every point by a result of L. Evans and Ch. Smart, cf. \cite{ES}. At the moment of writing, the $C^1_{loc}$-property is not known in higher dimensions. This unsettled urgent question is the reason for why we restrict our exposition to two dimensions. In the plane the equation reads
  $$
  \Bigl(\frac{\partial u}{\partial x_1}\Bigr)^{\!2}\frac{\partial ^2u}{\partial x_1^2}\,+\,2\, \frac{\partial u}{\partial x_1}\frac{\partial u}{\partial x_2}\frac{\partial ^2u}{\partial x_1 \partial x_2}\,+\, \Bigl(\frac{\partial u}{\partial x_2}\Bigr)^{\!2}\frac{\partial ^2u}{\partial x_2^2}\,=\,0
  $$
  as in G. Aronsson's work \cite{A2} about the streamlines.

 \paragraph{Notation.} We fix some notation. Suppose that $\Omega$ is a \emph{convex} bounded domain in the plane $\mathbb{R}^2$ containing a compact \emph{convex} set $K$ with boundary $\Gamma = \partial K.$ The case when $K$ reduces to a single point is of special interest. The domain $G = \Omega \setminus K$ is a ``convex ring''; it has the outer boundary $\partial \Omega$ and the inner boundary $\Gamma$. The object of our work is the Dirichlet boundary value problem
  \begin{equation}\label{maineq}
    \begin{cases} \Delta_{\infty}u\,=\,0\qquad\text{in}\qquad G\\ \phantom{ \Delta_{\infty}}
      u\,=\,0\qquad\text{on}\qquad \partial \Omega\\\phantom {\Delta_{\infty}}
      u\,=\,1 \qquad\text{on}\qquad \Gamma.
    \end{cases}
  \end{equation}
  The unique solution, say $V_{\infty}$, attains the boundary values in the classical sense (this holds for all domains, whether they are convex or not). Hence
  $$V_{\infty} \in C(\overline{G})\quad \text{where} \quad \overline{G}\, =\, \partial \Omega \cup G\cup \Gamma.$$

  \paragraph{Some properties.}
By the Maximum Principle, $0 < \V < 1$ in $G$. (It is convenient to put  $\V =1$ in $K$ and $=0$ outside $\Omega$.)  The gradient  $\nabla V_{\infty} \in C^{\alpha}_{loc}(G)$ for some small $\alpha$, cf. \cite{ESa}. We use some fundamental properties valid in convex rings, which are due to J. Lewis \cite{L}. See also \cite{Ja}. We need the following
  \begin{itemize}
  \item The level sets $\{V_{\infty}(x) > c\}$ are convex, $0 \leq c < 1.$
  \item $\Delta_{p}V_{\infty}\,\equiv\, \nabla\!\cdot\!\bigl(|\nabla V_{\infty}|^{p-2} \nabla V_{\infty}\bigr)\,\leq 0$ when $p \geq 2.$
  \item $\nabla V_{\infty} \neq 0$ in $G$.
  \end{itemize}
  We interpret the inequality $\Delta_{p}V_{\infty}\,\leq\,0$ in the viscosity sense. This is equivalent to the usual definition of $p$-superharmonic functions, cf. \cite{JLM}, \cite{JJ}. In particular ``$\Delta V_{\infty} \,\leq\,0$'' and so $ V_{\infty} $ \emph{is an ordinary superharmonic function.}

 \paragraph{Streamlines.} Let us return to the \emph{ascending} streamlines $x =x(t).$ They are the trajectories of the gradient flow
  \begin{equation}\label{asc}
    \begin{cases}
    \dfrac{dx}{dt}\,=\,\nabla V_{\infty}(x(t)),\quad t > t_0,\\
    x(t_0)\, =\, x_0 \in G\cup \partial \Omega
    \end{cases}
\end{equation}
and intersect the convex level curves orthogonally. (If the initial point $x_0 \in \partial \Omega$ and $\nabla  V_{\infty}(x(t_0)) =0,$ some special care is needed.) By Peano's Existence Theorem, \emph{there exists at least one solution starting at} $x_0.$ Since $\nabla V_{\infty}\neq0$, the trajectory cannot terminate inside $G$. In fact, $x(t) \in G$ when $t_0\leq t<T$ for some finite $T$ and $x(T) \in \Gamma.$ One of our main results is that the solution is unique.

\begin{thm}[Ascending uniqueness]\label{uniqueness} The solution to the equation   (\ref{asc}) of the ascending gradient flow  is unique and terminates at $\Gamma$.
\end{thm}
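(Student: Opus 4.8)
To prove Theorem~\ref{uniqueness} I would proceed as follows. That a solution of \eqref{asc} exists, stays in $G$, and reaches $\Gamma$ in finite time $T$ is as sketched just above the statement: Peano's theorem produces a trajectory, $\nabla\V\neq0$ forbids it to halt inside $G$, and since $\frac{d}{dt}\V(x(t))=|\nabla\V(x(t))|^{2}>0$ the potential increases strictly along it, so it leaves every compact subset of $G$ and approaches $\Gamma=\{\V=1\}$. The substantial point is \emph{uniqueness}, and the obstruction is that $\nabla\V$ is only H\"older continuous, so the Picard--Lindel\"of argument fails. The idea is to replace the missing Lipschitz regularity of the vector field by the convexity of the level sets.

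First I would reparametrise a streamline by its own potential. Fix $x_{0}\in G$, set $c_{0}=\V(x_{0})$, and let $x(\cdot)$ solve \eqref{asc}. Since $s(t):=\V(x(t))$ has derivative $|\nabla\V(x(t))|^{2}>0$ on $[t_{0},T)$, it is a $C^{1}$ bijection of $[t_{0},T)$ onto $[c_{0},1)$ whose inverse is $C^{1}$ (the derivative never vanishes); hence the trace $\xi(s):=x(t(s))$ is a $C^{1}$ curve on $[c_{0},1)$ with $\xi(c_{0})=x_{0}$ and
\[ \frac{d\xi}{ds}=\frac{\nabla\V(\xi(s))}{|\nabla\V(\xi(s))|^{2}}=\rho(s)\,\nu(s),\qquad \rho(s):=|\nabla\V(\xi(s))|^{-1}>0,\quad \nu(s):=\nabla\V(\xi(s))/|\nabla\V(\xi(s))|. \]
Because $\V(\xi(s))=s$ and $\nabla\V\neq0$, the point $\xi(s)$ lies on $\partial\Omega_{s}$ with $\Omega_{s}:=\{\V>s\}$, and $\nabla\V(\xi(s))$ points into $\Omega_{s}$; so $\nu(s)$ is the \emph{inner} unit normal of the convex body $\Omega_{s}$ at $\xi(s)$.

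Then comes the key step. Let $\xi_{1},\xi_{2}$ be the traces of two solutions of \eqref{asc} from the same $x_{0}\in G$ and put $g(s):=\tfrac12|\xi_{1}(s)-\xi_{2}(s)|^{2}$, so that $g(c_{0})=0$ and
\[ g'(s)=\rho_{1}(s)\,\bigl\langle\xi_{1}(s)-\xi_{2}(s),\,\nu_{1}(s)\bigr\rangle+\rho_{2}(s)\,\bigl\langle\xi_{2}(s)-\xi_{1}(s),\,\nu_{2}(s)\bigr\rangle . \]
By convexity of $\Omega_{s}$ the supporting line at $\xi_{1}(s)$ with inner normal $\nu_{1}(s)$ satisfies $\langle y-\xi_{1}(s),\nu_{1}(s)\rangle\geq0$ for every $y\in\overline{\Omega_{s}}$; with $y=\xi_{2}(s)\in\partial\Omega_{s}$ this gives $\langle\xi_{1}(s)-\xi_{2}(s),\nu_{1}(s)\rangle\leq0$, and symmetrically $\langle\xi_{2}(s)-\xi_{1}(s),\nu_{2}(s)\rangle\leq0$. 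Since $\rho_{1},\rho_{2}>0$, we obtain $g'\leq0$ on $[c_{0},1)$; a nonnegative $C^{1}$ function with $g(c_{0})=0$ and $g'\leq0$ is identically zero, so $\xi_{1}\equiv\xi_{2}$, and undoing the reparametrisation yields uniqueness. The unique streamline then terminates at $\Gamma$ by the first paragraph. (If $x_{0}\in\partial\Omega$ with $\nabla\V(x_{0})\neq0$ the trajectory enters $G$ immediately, the same estimate applies on $(0,1)$, and $\xi_{i}(s)\to x_{0}$ as $s\to0^{+}$ forces agreement; the exceptional case $\nabla\V(x_{0})=0$ on $\partial\Omega$ is the one flagged above the statement and is handled separately.)

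The main difficulty is conceptual rather than computational: the substitute for Lipschitz continuity of $\nabla\V$ is the geometric observation that, once streamlines are read off in the potential variable, two of them issuing from a common point can only come together and never drift apart, because at each common level $s$ both sit on the boundary of the \emph{convex} set $\Omega_{s}$ with velocities pointing into it. The technical points deserving care are the $C^{1}$-dependence obtained from the reparametrisation (which rests on $\nabla\V\neq0$ in $G$) and the behaviour of $\V$ at critical points on $\partial\Omega$.
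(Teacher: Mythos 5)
Your proof is correct, but it takes a genuinely different route from the paper's. The paper deduces uniqueness from the integral inequality of Proposition~\ref{fundamental}: if two ascending streamlines issued from the same point hit a level curve at distinct points $y_1\neq y_2$, that inequality applied to the curvilinear triangle they bound gives $\int_{y_1}^{y_2}|\nabla\V|^{p-1}\,ds\leq 0$ (the streamline arcs contribute nothing since $\langle\nabla\V,\mathbf{n}\rangle=0$ there), forcing $\nabla\V\equiv 0$ on the level arc and contradicting $\nabla\V\neq 0$. You instead reparametrize by the level and exploit the convexity of $\{\V>s\}$ directly: both traces lie on the boundary of this convex set, their velocities are positive multiples of the inner normals, and the supporting-line inequality (monotonicity of the Gauss map) makes $|\xi_1(s)-\xi_2(s)|^2$ non-increasing in $s$. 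Both arguments ultimately rest on Lewis's results for convex rings --- the paper's on $\Delta_p\V\leq 0$ filtered through Proposition~\ref{fundamental}, yours on the convexity of the level sets together with $\nabla\V\neq 0$ and $\nabla\V\in C(G)$ --- but yours bypasses the delicate regularization behind Proposition~\ref{fundamental} entirely, a real economy for this particular theorem (the paper needs that proposition anyway for Lemma~\ref{speedlevel} and Theorem~\ref{Cl}, so nothing is saved globally). A pleasant consistency check on your argument is that it is genuinely one-sided: for the descending flow the same computation only yields $g(s)\geq g(c_0)$ for $s<c_0$, so it does not wrongly exclude the bifurcation that the paper shows actually occurs. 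Two small points deserve an explicit line each: ``undoing the reparametrisation'' uses that once the traces agree, both time-parametrizations solve the scalar autonomous equation $s'=|\nabla\V(\xi(s))|^{2}$ with continuous, strictly positive right-hand side, whose solution is unique by separation of variables; and the inclusion $\{\V=s\}\cap G\subset\overline{\{\V>s\}}$, needed to place $\xi_2(s)$ in the closed convex set before invoking the supporting line at $\xi_1(s)$, holds precisely because $\nabla\V\neq 0$ in $G$.
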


Despite uniqueness, two trajectories, starting at different points, can meet and join. But the trajectories cannot cross. \emph{The first point at which two streamlines meet} (after which they become a joint trajectory) {is here called a Cl-point}. Notice that uniqueness is not valid for the usual \emph{descending} streamlines coming from the equation 
$$
\frac{dx}{dt}\,=\,-\,\nabla V_{\infty}(x(t))
$$
with a \emph{minus} sign! They allow bifurcation. The proof of the uniqueness theorem is delicate, since the Picard-Lindel\"{o}f Theorem is not applicable, when $\nabla V_{\infty}$ is not Lipschitz continuous. (Mere H\"{o}lder continuity is not sufficient.) We base our reasoning on the expedient inequality
\begin{equation}\label{expedient}
  \oint_{\partial D}|\nabla V_{\infty}|^{p-2}\langle\nabla V_{\infty},\mathbf{n}\rangle\,ds\,\leq\, 0,\qquad p \geq 2,
\end{equation} valid for any domain $D \subset \subset G$ with Lipschitz boundary $\partial D$. Here $\mathbf{n}$ denotes the outer unit normal. The proof given in Proposition \ref{fundamental} requires several regularizations so that the inequality $\Delta_p V_{\infty}\,\leq\,0$ can be used pointwise as in \cite{JJ}. The difficulty is the absence of second derivatives.

Our next theorem provides a tricky device for detecting Cl-points.

\begin{thm}\label{Cl} Let $\xi_0 \in \partial \Omega$ and denote
  $$\alpha\,=\,\limsup_{x \to \xi_0}|\nabla  V_{\infty}(x)|.$$
  Assume that
  $$ \beta \,\leq \,\liminf_{x \to \xi}|\nabla  V_{\infty}(x)|\quad\text{whenever}\quad \xi \in \Gamma.$$
  If $\beta > \alpha$, then there exists a neighborhood of $\xi_0$ such that every pair of streamlines starting there will meet before reaching $\Gamma$.
\end{thm}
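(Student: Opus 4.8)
The plan is to deduce the statement from a single pair of streamlines together with the integral inequality~(\ref{expedient}), applied to a thin ``tube'' of the gradient flow with $p\to\infty$. Since $\alpha\ge 0$ and $\beta>\alpha$, fix $\varepsilon>0$ with $\alpha+\varepsilon<\beta-\varepsilon$. By the definition of $\alpha$ there is a ball $B=B(\xi_0,r)$ with $|\nabla V_\infty|<\alpha+\varepsilon$ on $B\cap G$, and by the hypothesis on $\Gamma$ together with its compactness there is $\delta>0$ such that $|\nabla V_\infty|>\beta-\varepsilon$ whenever $\dist(x,\Gamma)<\delta$; consequently the level curve $\{V_\infty=b\}$ lies inside this collar once $b$ is close enough to $1$. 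Pick $\xi_1,\xi_2\in\partial\Omega\cap B$ on opposite sides of $\xi_0$, let $A\subset\partial\Omega$ be the short arc joining them through $\xi_0$, and let $\gamma_1,\gamma_2$ be the ascending streamlines issuing from $\xi_1,\xi_2$ (well defined, and terminating at $\Gamma$, by Theorem~\ref{uniqueness}). If I can show that $\gamma_1$ and $\gamma_2$ meet at a point $P$ with $V_\infty(P)<1$, I am finished: the Jordan curve $\gamma_1\cup\gamma_2\cup A$ encloses a region $R$ which, together with the open arc $A$, is a neighbourhood of $\xi_0$ relative to $\overline\Omega$; since streamlines cannot cross $\gamma_1$ or $\gamma_2$ and $V_\infty$ is non-decreasing along them, every streamline started in this neighbourhood is trapped in $\overline R\subset\{V_\infty\le V_\infty(P)\}$, cannot terminate inside $G$, and hence must run into $\gamma_1$, $\gamma_2$ or $P$ — i.e.\ meet another streamline — while still at a level below $1$.

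To prove that $\gamma_1$ and $\gamma_2$ meet before $\Gamma$, assume the contrary. Since $\nabla V_\infty$ never vanishes in $G$, the value of $V_\infty$ is strictly increasing along each $\gamma_i$, so each $\gamma_i$ crosses any level curve $\{V_\infty=c\}$ (a convex Jordan curve) in exactly one point $p_i(c)$, with $p_1(c)\neq p_2(c)$ for all $c\in(0,1)$. Let $I_c$ be the sub-arc of $\{V_\infty=c\}$ between $p_1(c)$ and $p_2(c)$ on the side of $A$, and let $\ell(c)>0$ be its length. For $0<a<b<1$ let $D=D(a,b)$ be the curvilinear quadrilateral bounded by $I_a$, $I_b$ and the arcs of $\gamma_1,\gamma_2$ between levels $a$ and $b$; then $\overline D$ is a compact subset of $G$ and $\partial D$ is Lipschitz (the level curves are Lipschitz graphs by convexity, the streamlines are $C^1$). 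On the two streamline sides of $D$ one has $\langle\nabla V_\infty,\mathbf{n}\rangle\equiv 0$ because $\nabla V_\infty$ is tangent there; on $I_b$ the outer normal points towards larger values of $V_\infty$, so the integrand in~(\ref{expedient}) equals $+|\nabla V_\infty|^{p-1}$, while on $I_a$ it equals $-|\nabla V_\infty|^{p-1}$. Hence (\ref{expedient}) gives
\[
\int_{I_b}|\nabla V_\infty|^{p-1}\,ds\ \le\ \int_{I_a}|\nabla V_\infty|^{p-1}\,ds,\qquad p\ge 2 .
\]

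Now I localise. Choose $b$ so close to $1$ that $I_b$ lies in the collar where $|\nabla V_\infty|>\beta-\varepsilon$, and choose $a$ so small that $I_a\subset B$, where $|\nabla V_\infty|<\alpha+\varepsilon$; the latter is possible because $p_i(a)\to\xi_i\in A$ as $a\downarrow 0$ and, by convexity, the level curves $\{V_\infty=a\}$ tend to $\partial\Omega$ in the Hausdorff distance, so the $A$-side arc $I_a$ contracts into $B$. With $a$ and $b$ now fixed, the displayed inequality yields $(\beta-\varepsilon)^{p-1}\ell(b)\le(\alpha+\varepsilon)^{p-1}\ell(a)$, i.e.
\[
\ell(b)\ \le\ \Bigl(\frac{\alpha+\varepsilon}{\beta-\varepsilon}\Bigr)^{p-1}\ell(a);
\]
since $\alpha+\varepsilon<\beta-\varepsilon$, letting $p\to\infty$ forces $\ell(b)\le 0$, which contradicts $\ell(b)>0$. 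Therefore $\gamma_1$ and $\gamma_2$ must meet before reaching $\Gamma$, and by the first paragraph the theorem follows.

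The analytic core — inserting the tube into~(\ref{expedient}) and letting $p\to\infty$ — is short; the main work is geometric. \emph{The crux is the localisation near $\partial\Omega$}: showing that the outer level arc $I_a$ really can be pushed into the neighbourhood $B$ where $|\nabla V_\infty|$ is close to $\alpha$ — this uses the convexity of the level sets, their Hausdorff convergence to $\partial\Omega$, and the transversal departure of streamlines from $\partial\Omega$. One must also check that $D(a,b)$ is an admissible Lipschitz domain, that $\{V_\infty=b\}$ sits in a $\delta$-collar of $\Gamma$ once $b$ is near $1$, and that the non-crossing property genuinely upgrades ``one pair meets'' to ``a whole neighbourhood works'' (in particular that $\overline R$ does not reach $\Gamma$). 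Finally, the ``special care'' point — $\nabla V_\infty\to 0$ at $\xi_0$ or at $\xi_1,\xi_2$ — should be acknowledged; it affects neither the existence of the streamlines (Peano) nor their termination at $\Gamma$, so the argument stands.
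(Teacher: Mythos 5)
Your proof is correct and follows essentially the same route as the paper: the analytic core — the curvilinear quadrilateral bounded by two streamlines and two level arcs, inequality (\ref{expedient}) with the streamline sides contributing nothing, and the limit $p\to\infty$ — is exactly the paper's Lemma \ref{speedlevel} (stated there as a sup-norm comparison obtained by taking $(p-1)$-th roots), applied with the lower arc near $\xi_0$ and the upper arc in a collar of $\Gamma$. Your additional topological step (trapping a whole neighbourhood inside the Jordan curve formed by one meeting pair) is not needed in the paper, which applies the lemma directly to an arbitrary pair of streamlines starting in the neighbourhood, but it is a harmless and in places more careful elaboration (notably your justification that the lower level arc really lies where $|\nabla V_\infty|<\alpha+\varepsilon$).
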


In general, we have not succeeded in proving that the speed $|\nabla V_{\infty}(x(t))|$ is non-decreasing along the streamline. Thus the use of the theorem is somewhat elaborate. Let us mention some immediate consequences. First, the fact that two streamlines meet means that the \emph{descending} gradient flow does not have unique solutions. By the Picard-Lindel\"{o}f Theorem the function $\mathbf{-}V_{\infty}$ cannot therefore belong to the class $C^{1,1}_{loc}(G)$ in the presence of Cl-points. By general theory, the \emph{descending} gradient flow $\frac{dx}{dt} = -\nabla u(x)$ has a unique solution if $u$ is locally semiconvex . It follows that our $V_{\infty}$ cannot be locally semi\emph{convex}\footnote{A function $f$ is semiconvex if $f(x)+C|x|^2$ is convex for some constant $C>0$.}. (Neither can $\psi(V_{\infty})$ be for a smooth strictly monotone function $\psi$, since $V_{\infty}$ and $\psi(V_{\infty})$ have the same level sets.)

To apply the theorem we notice that it is always possible to choose $\beta > 0$,  see Lemma \ref{topp}. Thus, if we can find a point $\xi_0 \in \partial \Omega$ yielding $\alpha = 0$, we have obtained the inequality $\beta > \alpha$. According to a result in \cite{MPS} the following holds in convex domains in the plane: if the boundary has an irregular boundary point which is  a corner with interior angle less than $\pi$, then $|\nabla V_{\infty}| = 0$ at the corner. This provides an $\alpha = 0$.

\begin{thm} If $\partial \Omega$ has  a corner with angle less than $\pi$, then there are streamlines that meet in $G$ before reaching $\Gamma$. In particular, $V_{\infty}$ is not of class $C^{1,1}_{loc}(G)$.
\end{thm}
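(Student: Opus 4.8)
The plan is to combine three facts that are already in hand: the corner estimate quoted from \cite{MPS}, the uniform lower bound on the inner boundary $\Gamma$ from Lemma~\ref{topp}, and the Cl-point criterion of Theorem~\ref{Cl}. Let $\xi_0\in\partial\Omega$ be a corner with interior angle strictly less than $\pi$. First I would invoke \cite{MPS}: at such a corner of a convex planar domain the speed of the infinity-potential vanishes, so
$$\alpha\,=\,\limsup_{x\to\xi_0}|\nabla V_{\infty}(x)|\,=\,0.$$
The only thing to check here is that a corner of $\partial\Omega$ with angle $<\pi$ is precisely the configuration treated in \cite{MPS}; since $\Omega$ is convex, every genuine corner automatically has interior angle in $(0,\pi)$, so this is immediate and nothing beyond citing the reference is needed.

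Next I would apply Lemma~\ref{topp} to get a constant $\beta>0$ with $\beta\le\liminf_{x\to\xi}|\nabla V_{\infty}(x)|$ for every $\xi\in\Gamma$. Then $\beta>0=\alpha$, so Theorem~\ref{Cl} applies and furnishes a neighborhood $U$ of $\xi_0$ such that every pair of streamlines starting in $U$ meets before reaching $\Gamma$. To obtain the first assertion it now suffices to start two ascending streamlines at two distinct points of $G\cap U$ (interior points, where $\nabla V_{\infty}\neq0$ and the flow is unambiguous): these are distinct curves, yet they must meet at some point $z$ before hitting $\Gamma$, and the first such meeting point is a Cl-point lying in $G$. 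This is a pair of streamlines that meet in $G$ before reaching $\Gamma$.

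For the regularity statement I would argue by contradiction. Following the two streamlines backwards from the Cl-point $z$ produces two curves emanating from $z$, each solving the descending gradient flow $\frac{dx}{dt}=-\nabla V_{\infty}(x)$; because $z$ is the \emph{first} point at which the two ascending streamlines coincide, these two descending solutions are distinct on a nontrivial initial segment. Hence the descending flow is not uniquely solvable at $z$. If $V_{\infty}$ were of class $C^{1,1}_{loc}(G)$, then $-\nabla V_{\infty}$ would be locally Lipschitz and the Picard--Lindel\"{o}f theorem would force uniqueness of the descending flow, a contradiction. Therefore $V_{\infty}\notin C^{1,1}_{loc}(G)$.

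The only genuinely delicate ingredient is the appeal to \cite{MPS} that provides $\alpha=0$; everything else is a direct combination of Theorem~\ref{Cl}, Lemma~\ref{topp}, and the elementary backward-flow plus Picard--Lindel\"{o}f argument. In particular, no part of the proof needs to revisit the hard uniqueness result (Theorem~\ref{uniqueness}) or the expedient inequality (\ref{expedient}), which are already encapsulated in the statements invoked.
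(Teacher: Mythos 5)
Your proposal is correct and follows essentially the same route as the paper: the authors also obtain $\alpha=0$ at the corner from \cite{MPS}, take $\beta>0$ from Lemma~\ref{topp}, feed $\beta>\alpha$ into Theorem~\ref{Cl} to produce meeting streamlines, and then rule out $C^{1,1}_{loc}$ via non-uniqueness of the descending flow and Picard--Lindel\"{o}f. No substantive differences.
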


For  a special kind of domains  the distance function $\mathrm{dist}(x,\partial \Omega)$  is the $\infty$-potential. A \emph{stadium} is a domain where the distance function attains its maximum value at all its  singular points. These sets have a simple characterization in the plane. Namely, 
$$
H = \{x|\,\dist(x,\partial \Omega)=\|\dist(x,\partial \Omega)\|_{\infty}\},
$$
$$
\Omega=\{x|\, \dist(x,H)<\|\dist(x,\partial \Omega)\|_{\infty}\}.$$
See Theorem 6 in \cite{CF}. The set $H$ is called the \emph{High Ridge}. The simplest example of a stadium is the unit disk:
$$
H = \{0\}, \quad \Omega = \{x|\,0 < |x| < 1\}.
$$
In a stadium, when $\Gamma$ is the High Ridge, the solution is smooth and no streamlines meet. We argue that all other convex rings have Cl-points. If $\Gamma$ is a single point we have the following theorem.

\begin{thm}\label{single} Assume that $\Gamma$ is a single point. If $\Omega$ is not a disk centered at $\Gamma$, then there are  streamlines that meet. In fact, all streamlines that are not entirely inside the closed disk with radius $\mathrm{dist}(\Gamma, \partial \Omega)$ centered at $\Gamma$ have Cl-points.   In particular, $V_{\infty}$ is not $C^{1,1}_{loc}(G)$.
\end{thm}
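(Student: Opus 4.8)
\emph{Sketch of proof.} Since a Cl-point makes the descending gradient flow non-unique and hence forces $V_{\infty}\notin C^{1,1}_{loc}(G)$ by Picard--Lindel\"of, the plan is to reduce everything to producing Cl-points under the standing hypothesis. Write $\Gamma=\{p\}$, put $R=\dist(p,\partial\Omega)$ and let $\rho=\max_{x\in\partial\Omega}|x-p|$ be the largest distance from $p$ to $\partial\Omega$. From $B_R(p)\subseteq\Omega\subseteq\overline{B_\rho(p)}$ and convexity one gets $R\le\rho$, with equality exactly when $\partial\Omega$ is a circle about $p$; so here $R<\rho$. I would then fix a point $\eta_0\in\partial\Omega$ with $|\eta_0-p|=\rho$; since $\Omega\subseteq\overline{B_\rho(p)}$, the line through $\eta_0$ perpendicular to the segment $[p,\eta_0]$ supports $\Omega$.

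First I would run the cone comparison. The cones $1-|x-p|/R$ and $1-|x-p|/\rho$ are $\infty$-harmonic in $\Omega\setminus\{p\}$, equal $1$ at $p$, and are $\le 0$, resp.\ $\ge 0$, on $\partial\Omega$; the comparison principle then yields
\[
1-\frac{|x-p|}{R}\ \le\ V_{\infty}(x)\ \le\ 1-\frac{|x-p|}{\rho}\qquad\text{on }\Omega\setminus\{p\},
\]
so that $B_{R(1-c)}(p)\subseteq\{V_{\infty}>c\}\subseteq B_{\rho(1-c)}(p)$ for $0\le c<1$. The upper cone touches $V_{\infty}$ at $\eta_0$ (and at $p$), the lower one at the point(s) of $\partial\Omega$ nearest $p$.

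Next I would extract a speed gap at the two ends of the ring and invoke Theorem~\ref{Cl}. Near $\eta_0$, comparing $V_{\infty}$ with the affine function that vanishes on the supporting line at $\eta_0$ and equals $1$ at $p$ (its gradient has norm $1/\rho$), and using that $V_{\infty}=0$ on $\partial\Omega$ so that $\nabla V_{\infty}$ points essentially inward there, one gets $\alpha:=\limsup_{x\to\eta_0}|\nabla V_{\infty}(x)|\le 1/\rho$; I would argue that this is \emph{strict}, because equality would, by feeding the first-order information back into the cone comparison along the (unique) ascending streamline issued from $\eta_0$ and using convexity of the level curves, force $V_{\infty}\equiv 1-|x-p|/\rho$ and hence $\Omega=B_\rho(p)$, which is excluded. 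On the other side, $\beta:=\liminf_{x\to p}|\nabla V_{\infty}(x)|\ge 1/\rho$ should come out of Lemma~\ref{topp} in the sharpened form available when $\Gamma$ is a single point (the lower bound being the reciprocal of $\rho$). With $\beta>\alpha$ in hand, Theorem~\ref{Cl} applied at $\xi_0=\eta_0$ gives a neighbourhood of $\eta_0$ in which every pair of streamlines meets in $G$, so Cl-points exist and $V_{\infty}\notin C^{1,1}_{loc}(G)$.

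Finally, for the sharper assertion I would note that the only streamlines that can stay inside $\overline{B_R(p)}$ are those issuing from a boundary point at distance exactly $R$ from $p$, every other streamline meeting $\partial\Omega$ strictly outside $\overline{B_R(p)}$. If two streamlines meet at a Cl-point $z$, they coincide beyond $z$ by Theorem~\ref{uniqueness}, so the region they bound pinches off at $z$ and any streamline starting between their starting points is trapped and must pass through $z$ as well; hence the set of starting points on $\partial\Omega$ whose streamline carries a Cl-point is a relatively open arc, which can fail to reach a non-nearest starting point $x_0$ only at a ``separatrix''. Running the argument of the previous two paragraphs at the start of such a separatrix (whose supporting line lies at distance $\ge R$ from $p$, with equality precisely when that point is nearest $p$) rules this out unless $x_0$ is a nearest point. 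The hard part throughout is the \emph{strict} gap $\beta>\alpha$: since $|\nabla V_{\infty}|$ is not known to be monotone --- let alone constant --- along streamlines, and $V_{\infty}$ has no second derivatives, one cannot simply transport the value of the speed, and the gap has to be squeezed out of the rigidity in the cone comparison (equality forces a disk) together with the $C^{1,\alpha}$-regularity of $\nabla V_{\infty}$ and the convexity of the level sets.
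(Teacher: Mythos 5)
Your overall strategy --- produce a speed gap between the inner and outer boundary and feed it into Theorem~\ref{Cl} --- is sound in spirit, but the two steps you yourself flag as delicate are genuine gaps, and they are precisely the points where the paper's proof takes a different (and essential) route. First, your argument hinges on the \emph{strict} inequality $\alpha=\limsup_{x\to\eta_0}|\nabla V_\infty(x)|<1/\rho$, which you justify only by an asserted rigidity statement (``equality forces $V_\infty\equiv 1-|x-p|/\rho$''). No proof of this is offered, and even the non-strict bound $\alpha\le 1/\rho$ does not follow from the barrier $0\le V_\infty\le \dist(\cdot,L)/\rho$ without a sharp boundary gradient estimate (comparison with cones on small balls near $\eta_0$ loses a constant). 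Since your lower bound at the apex is only $\beta\ge 1/\rho$, without strictness you get $\beta\ge\alpha$ and no conclusion. The paper sidesteps this entirely: Proposition~\ref{beta} and Corollary~\ref{singledist} (resting on the Savin--Wang--Yu asymptotics near an isolated singularity) give the exact limit $\lim_{x\to p}|\nabla V_\infty(x)|=1/R=\sup_G|\nabla V_\infty|$, so the needed gap is $1/R>1/\rho$, i.e.\ it comes for free from $R<\rho$ rather than from an unproved rigidity at the far boundary point. You should use this corollary; it is the engine of the theorem.

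Second, your ``separatrix'' argument for the stronger claim (that \emph{every} streamline not contained in $\overline{B_R(p)}$ carries a Cl-point) does not work as stated: the relative openness of the set of starting points whose streamline has a Cl-point is not justified, and ``running the argument at the start of the separatrix'' presupposes the strict gradient bound at an arbitrary boundary point, which is gap one again. The paper instead argues along each individual streamline $x(t)$ issued from $\xi$ with $|\xi|>R$ via a dichotomy: either $|\nabla V_\infty(x(t^*))|<1/R$ at some point, in which case Lemma~\ref{speedlevel} (the curved-quadrilateral comparison) combined with $|\nabla V_\infty|>1/R-\varepsilon$ near the apex forces every nearby streamline to merge with $x(t)$ before reaching the top, so $x(t)$ has a Cl-point; or $|\nabla V_\infty(x(t))|\equiv 1/R$ along the whole streamline, in which case the identity $1=\int_0^T|\nabla V_\infty(x(t))|^2\,dt$ shows the streamline has arclength exactly $R$, contradicting $|\xi|>R$. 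This length computation is the missing idea that turns a local statement near one boundary point into the assertion for all streamlines leaving $\overline{B_R(p)}$. Your reduction of the $C^{1,1}_{loc}$ failure to the existence of Cl-points via Picard--Lindel\"of is correct and matches the paper.
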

That $V_{\infty}$ is not of class $C^{2}_{loc}(G)$ has been proved before, see Corollary 1.2 in \cite{SWY}. See also Corollary 23 in \cite{CF2} for a related result. Also the case when $\Gamma$ is a subset of the \emph{High Ridge} (though the domain is not necessarily a stadium) is accessible.
\begin{thm}\label{hrthm}
Suppose 	$\Gamma$ is a subset of the High Ridge of $\Omega$. Unless $\Omega$ is a stadium and $\Gamma$ its High Ridge, there are streamlines that meet.  In particular,  $V_{\infty}$ is not $C^{1,1}_{loc}(G)$.
\end{thm}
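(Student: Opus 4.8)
The plan is to reduce this to Theorem \ref{Cl} by producing, on $\partial\Omega$, a point $\xi_0$ with $\alpha = \limsup_{x\to\xi_0}|\nabla V_\infty(x)| = 0$, and then invoking Lemma \ref{topp} to get $\beta > 0$ on $\Gamma$. Since $\Gamma$ lies on the High Ridge, every point of $\Gamma$ realizes the maximal distance $R := \|\dist(\cdot,\partial\Omega)\|_\infty$ to $\partial\Omega$. The dichotomy in the hypothesis is exactly the dichotomy from the stadium characterization quoted from \cite{CF}: either $\Omega$ coincides with the $R$-neighborhood of $\Gamma$ (equivalently of the full High Ridge $H\supseteq\Gamma$) — the stadium case, which is excluded — or it does not. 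So I would first argue that in the non-stadium case there must be a point $\xi_0\in\partial\Omega$ whose distance to $\Gamma$ is strictly greater than $R$; such a point exists because $\partial\Omega$ is not contained in $\{\dist(\cdot,\Gamma)=R\}$ (otherwise, by convexity and the $\overline{CF}$ characterization, $\Omega$ would be the stadium with ridge $\Gamma$). Here I should be slightly careful: it is $H$, not $\Gamma$, that is forced to be the full ridge in the stadium; but if $\Gamma\subsetneq H$ then already $\dist(\xi,\Gamma)>\dist(\xi,H)$ for suitable $\xi$, and one still finds boundary points far from $\Gamma$.

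Second, at such a $\xi_0$ I claim $|\nabla V_\infty|$ tends to $0$. The mechanism is a barrier/comparison argument on the inner boundary condition. Because $\dist(\xi_0,\Gamma) > R \ge \dist(\xi_0,\partial\Omega)$ — in fact every point near $\partial\Omega$ is much closer to $\partial\Omega$ than to $\Gamma$ — the solution $V_\infty$ near $\xi_0$ is squeezed between $0$ on $\partial\Omega$ and small values coming from the far-away set $\{V_\infty \ge c\}$. Concretely, I would build an $\infty$-superharmonic comparison function of the form $A\,\dist(x,\partial\Omega)^{\gamma}$ or, more robustly, use the cone comparison that is standard for $\infty$-harmonic functions: $V_\infty(x) \le \dfrac{\dist(x,\partial\Omega)}{\dist(x,\partial\Omega)+\rho}$-type cones emanating from points on $\partial\Omega$, where the relevant slope is controlled by $1/\dist(\Gamma,\partial\Omega)$ measured from $\xi_0$, i.e. by $1/\dist(\xi_0,\Gamma) \le 1/R$, whereas a corner or flat piece of $\partial\Omega$ at $\xi_0$ forces the incoming cone from the $\Gamma$-side to have slope strictly less than the ambient one — giving $\limsup_{x\to\xi_0}|\nabla V_\infty| \le$ (slope from $\Gamma$ side) which I then show is $0$ by iterating the cone estimate, or alternatively by directly citing the corner result of \cite{MPS} once I exhibit that $\partial\Omega$ cannot be smooth-and-strictly-convex everywhere near $\xi_0$ in the non-stadium case. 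The cleanest route is: non-stadium $\Rightarrow$ $\partial\Omega$ has a point where the supporting line touches along a segment or where curvature degenerates $\Rightarrow$ the geometry there is that of an irregular point for the relevant exterior problem, hence $\alpha = 0$ by \cite{MPS}.

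Third, with $\alpha = 0 < \beta$ in hand, Theorem \ref{Cl} yields a neighborhood of $\xi_0$ from which every pair of streamlines meets before reaching $\Gamma$, i.e. Cl-points exist. Then the already-recorded consequence applies: meeting streamlines means the descending flow $\dot x = -\nabla V_\infty$ is non-unique, so by Picard--Lindel\"of $-V_\infty \notin C^{1,1}_{loc}(G)$, hence $V_\infty \notin C^{1,1}_{loc}(G)$.

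The main obstacle is the second step — pinning down that the non-stadium hypothesis genuinely forces $\alpha = 0$ at some boundary point. Proving $\limsup|\nabla V_\infty| = 0$ is not automatic from "$\xi_0$ is far from $\Gamma$"; one needs either a genuine corner (to quote \cite{MPS} verbatim) or a quantitative cone-comparison argument showing the gradient decays when the distance from the inner boundary strictly exceeds the local "reach" of $\partial\Omega$. Making the geometric dichotomy from \cite{CF} interface cleanly with the barrier construction — in particular handling the case where $\partial\Omega$ is $C^1$ but not strictly convex, where there is no literal corner — is where the real work lies, and I would likely treat the strictly-convex-smooth case and the cornered/flat case separately, the former being vacuous under the stadium characterization and the latter handled by \cite{MPS}.
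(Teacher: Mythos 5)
Your reduction to Theorem \ref{Cl} hinges on producing a boundary point $\xi_0\in\partial\Omega$ with $\alpha=\limsup_{x\to\xi_0}|\nabla V_\infty(x)|=0$, and you propose to extract such a point from the non-stadium hypothesis via corners, flat pieces, or degenerate curvature. This step fails. An ellipse that is not a disk has High Ridge equal to its center, is not a stadium, and yet its boundary is real-analytic and strictly convex: there is no corner to which \cite{MPS} applies, and a standard cone barrier from inside gives $\liminf_{x\to\xi}|\nabla V_\infty(x)|>0$ at every point of such a boundary. So your claimed dichotomy ``non-stadium $\Rightarrow$ flat piece or curvature degeneration'' is false, and the smooth strictly convex non-stadium case, which you dismiss as vacuous, is exactly the case your argument cannot handle. (Even where $\partial\Omega$ does contain a flat segment the gradient need not vanish: in the paper's square example $\nabla V_\infty$ is nonzero on the open sides and vanishes only at the four corners.)

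The paper's route is different and never looks at the outer boundary. Proposition \ref{HRprop} shows that when $\Gamma$ lies on the High Ridge, $\lim_{x\to\xi}|\nabla V_\infty(x)|=1/\dist(\Gamma,\partial\Omega)$ for $\xi\in\Gamma$, i.e.\ the speed at the \emph{inner} boundary attains exactly the global supremum of $|\nabla V_\infty|$ furnished by Lemma \ref{nonzero}. One then picks $\xi\in\partial\Omega$ outside the largest stadium with ridge $\Gamma$ contained in $\Omega$ (such a $\xi$ exists in the non-stadium case, as you correctly observe) and follows the streamline from $\xi$ to $\Gamma$, exactly as in the proof of Theorem \ref{single}: either the speed drops strictly below the supremum somewhere along it, in which case the quadrilateral inequality of Lemma \ref{speedlevel}, with the upper level arc taken near $\Gamma$ where the speed equals the full supremum, forces nearby streamlines to join; or the speed is identically the supremum, which forces the streamline's length to equal $\dist(\Gamma,\partial\Omega)$, contradicting $\dist(\xi,\Gamma)>\dist(\Gamma,\partial\Omega)$. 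The ingredient you are missing is this exact gradient limit at $\Gamma$; Lemma \ref{topp} alone, giving only $\beta\geq 1/\mathrm{diam}(\Omega)$, is too weak to close either your argument or the paper's.
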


We also mention that Theorem \ref{single} reveals a queer \emph{instability} for the $\infty$-Laplace equation. Indeed, the solution of \eqref{maineq} in the disk $0 < |x| < 1$ is smooth, while the  corresponding solution
%to \eqref{maineq}
in an ellipse exhibits points where the second order derivatives are not bounded. After a coordinate  transformation, this implies that in a disk the solution  of \eqref{maineq}  with $\Delta_\infty $  replaced by the operator 
$$
u_x^2u_{xx}+2(1+\delta)u_xu_yu_{xy}+(1+\delta)^2u_y^2 u_{yy}
$$
exhibits this kind of singularites for any $\delta>0$, but not for $\delta=0$. A similar instability occurs if the midpoint of the disk is perturbed.

We conclude our work with some remarks about  a square. This is a challenging example, indeed. Now the domain $\Omega$ is a square and $\Gamma$ is its midpoint. In this case the gradient $\nabla\V$ is continuous also on the sides, but $\nabla \V = 0$ at the four corners (and only there), which gives an $\alpha = 0$ for free in Theorem \ref{Cl}. By symmetry the diagonals are streamlines, so are the medians. It seems as if all the streamlines, except the  four medians, would join a diagonal before reaching the midpoint (see Figure 1). We record three results.

First, we show that there are infinitely many Cl-points near the corners. Second, we show that also near the origin there are are infinitely many Cl-points. Finally, we argue that all the streamlines, except  the medians, do have infinitely many Cl-points. (It seems as if all points on the diagonals were Cl-points and that these are the only Cl-points.) It is likely that the $\infty$-harmonic potential function is related to the $\infty$-eigenvalue problem, introduced  in \cite{JLM3}. Indeed, this resemblance was the starting point of our investigation.

The reader is supposed to be familiar with the $\infty$-Laplacian. For the concept of viscosity solutions we refer to \cite{K} and \cite{CIL}. We use standard notation. We restrict ourselves to the plane, but most of our exposition is valid even in higher dimensions provided that the gradient $\nabla \V$ be continuous.

\section{Preliminaries}

A fundamental tool is inequality (\ref{expedient}) for line integrals. For smooth functions it comes from an integration by parts. We shall use the method in \cite{JJ}.

\begin{prop} \label{fundamental} Let $p \geq 2$ and assume that $D \subset \subset G$ has a Lipschitz boundary $\partial D$. Then
  \begin{equation}\label{normal}
   \boxed{ \oint_{\partial D} |\nabla \V|^{p-2}\langle \nabla  \V, \mathbf{n}\rangle\,ds\,\leq\,0}
  \end{equation}
  where $\mathbf{n}$ is the outer unit normal.
\end{prop}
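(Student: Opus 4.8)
The plan is to reduce \eqref{normal} to the classical situation. If $\V$ were of class $C^{2}$ in a neighbourhood of $\overline D$ there would be nothing to do: one writes $\Delta_{p}\V=\divergence\bigl(|\nabla\V|^{p-2}\nabla\V\bigr)$, integrates the inequality $\Delta_{p}\V\le0$ over $D$, and applies the Divergence Theorem. The whole difficulty is the absence of second derivatives, and the plan is to restore the inequality at an integrated level by a regularization in the spirit of \cite{JJ}. Fix an open set $D'$ with $D\Subset D'\Subset G$. Since $\nabla\V$ is continuous and never zero in $G$, there are constants $0<c\le C$ with $c\le|\nabla\V|\le C$ on $\overline{D'}$; this non-degeneracy will be crucial.

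First I would replace $\V$ by its inf-convolution
\[
\V^{\varepsilon}(x)\;=\;\inf_{y\in\overline G}\Bigl\{\V(y)+\tfrac{1}{2\varepsilon}|x-y|^{2}\Bigr\},\qquad\varepsilon>0 .
\]
This has the usual properties: $\V^{\varepsilon}\nearrow\V$; it is semiconcave, hence (Alexandrov) twice differentiable a.e., and the singular part of the matrix measure $D^{2}\V^{\varepsilon}$ is negative semidefinite; it remains a viscosity supersolution of $\Delta_{p}\V^{\varepsilon}=0$ on a subdomain of $G$ still containing $\overline{D'}$; and, because $\V\in C^{1}$, one has $\nabla\V^{\varepsilon}\to\nabla\V$ locally uniformly, so that $\tfrac c2\le|\nabla\V^{\varepsilon}|\le 2C$ on $\overline{D'}$ for $\varepsilon$ small. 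The gain is that a semiconcave viscosity supersolution obeys the equation pointwise: by Jensen's lemma one produces, near any Alexandrov point, genuine paraboloids touching $\V^{\varepsilon}$ from below, whence
\[
\Delta_{p}\V^{\varepsilon}(x)\;=\;|\nabla\V^{\varepsilon}|^{p-2}\Delta\V^{\varepsilon}+(p-2)\,|\nabla\V^{\varepsilon}|^{p-4}\Delta_{\infty}\V^{\varepsilon}\;\le\;0\qquad\text{for a.e. }x\in D',
\]
evaluated with the pointwise a.e. Hessian. This is the step that uses \cite{JJ}.

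Next I would upgrade this pointwise inequality to one about measures. On $\overline{D'}$ the gradients $\nabla\V^{\varepsilon}$ lie in the compact annulus $\{\tfrac c2\le|z|\le 2C\}$, on which the vector field $A(z)=|z|^{p-2}z$ is smooth with positive-definite Jacobian $DA(z)$ — this is where $p\ge2$ enters. Since $\nabla\V^{\varepsilon}$ is $BV_{\mathrm{loc}}$ (gradient of a semiconcave function), so is $A(\nabla\V^{\varepsilon})$, and the Ambrosio--Dal Maso chain rule splits the Radon measure $\divergence A(\nabla\V^{\varepsilon})$ into absolutely continuous, jump, and Cantor parts. The absolutely continuous part is precisely the pointwise expression displayed above, hence $\le0$. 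The jump part is $\le0$ because $\nabla\V^{\varepsilon}$ is curl-free, so its jump is purely normal to the jump set, and $t\mapsto\langle A(w+t\nu),\nu\rangle$ is increasing ($DA\succ0$) while semiconcavity makes the normal gradient jump downwards. The Cantor part is $\le0$ because the Cantor part of $D^{2}\V^{\varepsilon}$ is negative semidefinite and $\operatorname{tr}(DA\cdot\Theta)\le0$ whenever $DA\succ0$ and $\Theta\preceq0$. Hence $\divergence\bigl(|\nabla\V^{\varepsilon}|^{p-2}\nabla\V^{\varepsilon}\bigr)\le0$ as a Radon measure on $D'$, i.e. $\V^{\varepsilon}$ is a genuine weak supersolution there.

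Finally I would invoke the Gauss--Green formula for bounded vector fields with measure divergence, applied to $A(\nabla\V^{\varepsilon})$ on the Lipschitz domain $D$:
\[
\oint_{\partial D}\bigl\langle A(\nabla\V^{\varepsilon})^{\mathrm{int}},\mathbf n\bigr\rangle\,ds\;=\;\bigl(\divergence A(\nabla\V^{\varepsilon})\bigr)(D)\;\le\;0 ,
\]
where $A(\nabla\V^{\varepsilon})^{\mathrm{int}}$ is the interior normal trace on $\partial D$. Letting $\varepsilon\to0$ and using that $\nabla\V^{\varepsilon}\to\nabla\V$ uniformly in a neighbourhood of $\partial D$ — so these interior traces converge uniformly to the continuous field $|\nabla\V|^{p-2}\nabla\V$ restricted to $\partial D$ — gives \eqref{normal}. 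The main obstacle is, as advertised, the lack of second derivatives of $\V$: inf-convolution trades it for semiconcavity, after which the delicate point is the second step, where the jump and Cantor parts of $\divergence A(\nabla\V^{\varepsilon})$ must be shown nonpositive. That step relies both on $p\ge2$ (which gives $DA\succ0$) and, crucially, on $\nabla\V\neq0$, which confines all the gradients to a region where $A$ is smooth and non-degenerate; if $\nabla\V$ were allowed to vanish, $A$ would be only H\"older at the origin and this argument would collapse.
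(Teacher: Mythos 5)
Your proposal is correct in outline and shares the paper's first half --- the inf-convolution $\V^{\varepsilon}$, the pointwise a.e.\ inequality $\Delta_{p}\V^{\varepsilon}\le 0$ obtained from the viscosity property via Jensen's lemma as in \cite{JJ}, and the convergence $\nabla\V^{\varepsilon}\to\nabla\V$ with the uniform bound on the gradients --- but the second half takes a genuinely different route. The paper never treats $\divergence\bigl(|\nabla\V^{\varepsilon}|^{p-2}\nabla\V^{\varepsilon}\bigr)$ as a measure: it mollifies a \emph{second} time, $V_{\infty,\varepsilon,j}=\V^{\varepsilon}\star\rho_{j}$, applies the classical Divergence Theorem to these smooth functions, and passes to the limit in $j$ by Fatou's Lemma, the one-sided bound $-\Delta_{p}V_{\infty,\varepsilon,j}\ge -C^{p-2}p/\varepsilon$ needed for Fatou coming from the semiconcavity estimate $D^{2}V_{\infty,\varepsilon,j}\le I/\varepsilon$, which survives convolution. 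You instead stay with $\V^{\varepsilon}$ itself and show that $\divergence\bigl(|\nabla\V^{\varepsilon}|^{p-2}\nabla\V^{\varepsilon}\bigr)$ is a nonpositive Radon measure via the Ambrosio--Dal Maso chain rule, checking the absolutely continuous, jump and Cantor parts separately (the singular parts being handled by the fact that the singular part of $D^{2}\V^{\varepsilon}$ is negative semidefinite together with the monotonicity of $z\mapsto|z|^{p-2}z$), and then invoke Gauss--Green for divergence-measure fields. Both arguments rest on the same two pillars (Jensen's lemma for the a.c.\ part, semiconcavity for the singular part); yours yields the stronger intermediate statement that $\V^{\varepsilon}$, and hence $\V$, is a genuine weak $p$-supersolution, at the price of importing the $BV$ chain rule and the Anzellotti/Chen--Frid normal-trace theory, whereas the paper gets by with Fatou and the classical Divergence Theorem. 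Two small inaccuracies, neither fatal: the limit $\varepsilon\to 0$ in your last display needs the (true but not obvious) fact that the interior normal trace of a \emph{continuous} bounded divergence-measure field on a Lipschitz boundary is the classical pairing $\langle F,\mathbf{n}\rangle$ --- or, more simply, first pass the inequality $\divergence\le 0$ to the limit field $|\nabla\V|^{p-2}\nabla\V$ in the sense of distributions and integrate over slightly shrunken domains; and the closing claim that the argument would collapse without $\nabla\V\neq 0$ is overstated, since $z\mapsto|z|^{p-2}z$ is $C^{1}$ and monotone on all of $\mathbb{R}^{2}$ when $p\ge 2$, so the nondegeneracy is convenient but not essential (the paper's proof does not use it at all).
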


\bigskip
\begin{proof} Due to the lack of second derivatives we use two regularizations.

\bigskip
\emph{Step 1}. Let $V_{\infty,\varepsilon}$ be the infimal convolution
$$ V_{\infty,\varepsilon}(x)\,=\,\inf_{y \in G}\Bigl\{\V(y) +\frac{|x-y|^2}{2\varepsilon}\Bigr\}.$$
By standard theory $V_{\infty,\varepsilon} \nearrow \V$ locally uniformly in $G$ and
\begin{equation}\label{Ale}
  \Delta_{p}V_{\infty,\varepsilon}\,\leq\,0\qquad \text{in}\qquad D
\end{equation}
in the viscosity sense, when $\varepsilon >0$ is small enough. The fact that $\Delta_{p}\V\,\leq\,0 $ implies this. Furthermore, the function
$$
V_{\infty,\varepsilon}(x)\,-\,\frac{|x|^2}{2\varepsilon}$$
is concave. Therefore it has second derivatives in the sense of Alexandroff a.e. So does
$V_{\infty,\varepsilon}.$ It follows that inequality (\ref{Ale}) holds almost everywhere, when the second derivatives are taken in Alexandroff's sense. At almost every $x \in D$
\begin{align*}
  V_{\infty,\varepsilon}(y)\,=\, V_{\infty,\varepsilon}(x) + \langle\nabla V_{\infty,\varepsilon}(x),y-x\rangle\\
  +\frac{1}{2}\langle y-x,\mathbb{D}^2V_{\infty,\varepsilon}(x)(y-x)\rangle + o(|x-y|^2)
\end{align*}
as $y \to x.$ Here $\mathbb{D}^2V_{\infty,\varepsilon}$ is the Hessian matrix of second Alexandroff derivatives.

\bigskip
\emph{Step 2}. We claim that
$$\nabla V_{\infty,\varepsilon} \rightarrow \nabla \V\qquad$$
a.e. in $D$, as $\varepsilon \to 0$. Since $V_{\infty,\varepsilon}$ is Lipschitz continuous, it is differentiable almost everywhere. Fix a point $x\in D$ at which $\nabla V_{\infty,\varepsilon}(x)$ exists. The infimum is attained at a point $x_{\varepsilon}$ in $G$:
$$  V_{\infty,\varepsilon}(x)\,=\,\V(x_{\varepsilon}) +\frac{|x-x_{\varepsilon}|^2}{2\varepsilon}.$$ It is easy to see that
\begin{equation}\label{easy}
\nabla V_{\infty ,\varepsilon}(x)\,=\,\nabla\V(x_{\varepsilon}).
\end{equation}
Indeed, 
$$
V_{\infty,\varepsilon}(x+h)-V_{\infty,\varepsilon}(x)\leq V_\infty(y)+\frac{|x+h-y|^2}{2\varepsilon}-V_\infty(x_\varepsilon), 
$$
provided that $x+h$ and $y$ are in $G$. The choice $y=x_\varepsilon+h$ yields
$$
V_{\infty,\varepsilon}(x+h)-V_{\infty,\varepsilon}(x)\leq V_\infty(x_\varepsilon+h)-V_\infty(x_\varepsilon).
$$
Write $h=t\bf e$, $t>0$, where $\bf e$ is a unit vector.  Divide by $t$ and let $t\to 0^ +$ to see that
$$
\langle \nabla V_{\infty,\varepsilon}(x), {\bf e}\rangle \leq  \langle \nabla V_{\infty}(x_\varepsilon), \bf e\rangle.
$$
Since $\bf e$ was arbitary, \eqref{easy} follows. The convergence at $x$ now follows from 
\begin{align}
\nonumber|\nabla V_{\infty,\varepsilon}(x) - \nabla \V(x)|\,&=\,|\nabla \V(x_{\varepsilon})-\nabla\V(x)|\,\\
&\label{epsconv}\leq\, C_D|x-x_{\varepsilon}|^{\alpha}\\
&\leq C_D\varepsilon^{\alpha/2} \rightarrow 0,\nonumber 
\end{align}
as $\varepsilon \to 0$, upon renaming the constant, since $\nabla \V$ is locally  H\"{o}lder continuous in $G$. Thus \eqref{epsconv} holds at a.e. point $x$.

We also note that 
$$
\nabla V_{\infty,\varepsilon}(x)=\frac{x-x_\varepsilon}{\varepsilon}=\nabla \V(x_\varepsilon)
$$
necessarily holds at a point of differentiability. Therefore, $x_\varepsilon$ is unique at such a point.

From \eqref{easy} we also get the uniform bound 
$$
\|\nabla V_{\infty,\varepsilon}\|_{L^\infty(D)}\leq \|\nabla V_{\infty}\|_{L^\infty(G)}, 
$$
which will be needed.
% Also $|x-x_{\varepsilon}| \leq \sqrt{2\varepsilon}.$ In other words, $\nabla V_{\infty,\varepsilon} \to \nabla \V$ uniformly in $\overline{D}.$

\bigskip
\emph{Step 3}. To obtain second derivatives we define the convolution
$$V_{\infty,\varepsilon,j}\,=\,V_{\infty,\varepsilon}\star\rho_j$$
where $\rho_j$ is a standard mollifier. Since \eqref{epsconv} holds a.e., the following estimate follows from a standard argument
\begin{equation}
\label{peterest}
\|\nabla V_{\infty,\varepsilon,j}(x)-\nabla V_{\infty,j}(x)\|_{L^\infty(D)}\leq C\varepsilon^{\alpha/2}, 
\end{equation}
for some $\alpha>0$.

By the proof of Alexandroff's Theorem in \cite{EG}
$$ \mathbb{D}^2V_{\infty,\varepsilon}\,=\,\lim_{j \to \infty}\bigl(D^2(V_{\infty,\varepsilon}\star \rho_j)\bigr)$$
almost everywhere. Thus
$$\lim_{j \to \infty}\Delta_pV_{\infty,\varepsilon,j}\,=\,\Delta_pV_{\infty,\varepsilon}$$
almost everywhere in $D$; the second derivatives are in the sense of Alexandroff. The convolution preserves concavity: 
$$
D^2V_{\infty,\varepsilon,j}\,\leq\,\frac{I_2}{\varepsilon},\qquad \Delta V_{\infty,\varepsilon,j}\,\leq \,\frac{2}{\varepsilon}$$
where $I_2$ is the identity matrix. It is immediate that
$$|\nabla V_{\infty,\varepsilon,j}|\,\leq\, \|\nabla V_{\infty,\varepsilon} \|_{\infty,D}\,\leq\,\|\nabla \V\|_{\infty,G}\,=\,C.$$
Together, these inequalities yield the bound
$$
  -\Delta_p  V_{\infty,\varepsilon,j}\,\geq\,-C^{p-2}\,\frac{2+(p-2)}{\varepsilon}.
$$
Thus we can use Fatou's Lemma to obtain
\begin{align}\nonumber 
  \liminf_{j \to \infty}&\displaystyle\iint_D( -\Delta_p  V_{\infty,\varepsilon,j})\,dx_1dx_2\\\label{limsupineq}
  \geq&\iint_D \liminf_{j \to \infty} ( -\Delta_p  V_{\infty,\varepsilon,j})\,dx_1dx_2\\
  =&\iint_D( -\Delta_p  V_{\infty,\varepsilon})\,dx_1dx_2
  \geq \iint_D 0\,dx_1dx_2\,=\,0,\nonumber 
  \end{align}
where inequality (\ref{Ale}) was used at the end.

\bigskip
\emph{Step 4}. By the Divergence Theorem
$$\oint_{\partial D}|\nabla V_{\infty,\varepsilon,j}|^{p-2}\langle \nabla V_{\infty,\varepsilon,j},\mathbf{n}\rangle\,ds\,=\,\iint_D\Delta_p  V_{\infty,\varepsilon,j}\,dx_1dx_2.$$
By \eqref{peterest}, 
$$
\oint_{\partial D}|\nabla V_{\infty,\varepsilon,j}|^{p-2}\langle \nabla V_{\infty,\varepsilon,j},\mathbf{n}\rangle\,ds=\oint_{\partial D}|\nabla V_{\infty,j}|^{p-2}\langle \nabla V_{\infty,j},\mathbf{n}\rangle\,ds+\mathcal{O}(\varepsilon^{\alpha/2}).
$$
Therefore, since $\nabla V_{\infty,j}\to \nabla V_\infty$ uniformly, 
\begin{align*}
\displaystyle \oint_{\partial D}|\nabla V_{\infty}|^{p-2}\langle \nabla V_{\infty},\mathbf{n}\rangle\,ds+\mathcal{O}(\varepsilon^{\alpha/2})&\leq \displaystyle \limsup_{j\to\infty}\oint_{\partial D}|\nabla V_{\infty,\varepsilon,j}|^{p-2}\langle \nabla V_{\infty,\varepsilon,j},\mathbf{n}\rangle\,ds\\&\leq 0, 
\end{align*}
by \eqref{limsupineq}. Since $\varepsilon$ is arbitrary, the proposition follows.

\end{proof}

\bigskip
The function
$$
W_{\infty}\,=\log(\V)
$$
is often more convenient. It has the same level curves and streamlines as $\V$. Under the same assumptions as in Proposition $\ref{fundamental}$ we have
$$
 \boxed{ -(p-1)\iint_D |\nabla W_{\infty}|^{p}\,dx_1dx_2\,\geq \,
  \oint_{\partial D}|\nabla W_{\infty}|^{p-2}\langle\nabla W_{\infty},\mathbf{n}\rangle ds.}
$$
The proof is similar, since
$$\Delta_pv + (p-1)|\nabla v|^p \,=\,\frac{\Delta_pu}{u^{p+1}},\qquad v = \log(u)$$
holds for smooth functions $u > 0$.
\bigskip

\section{Estimates for the Gradient}

\begin{lemma}\label{nonzero} We have
  $$0 < |\nabla \V(x)| \leq \frac{1}{\mathrm{dist}(\Gamma,\partial \Omega)}\quad \text{when}\quad x \in G.$$
\end{lemma}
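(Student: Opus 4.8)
The statement has two halves, and only the upper one needs an argument: the lower bound $|\nabla\V(x)|>0$ in $G$ is one of the three fundamental properties of $\V$ in convex rings recorded above (J. Lewis), so I would simply invoke it. For the upper bound the plan is to use that $\V$, being $\infty$-harmonic in $G$ and continuous on $\overline G$, is the best Lipschitz extension of its boundary values (see \cite{A1}), and hence does not enlarge their Lipschitz constant; writing $\operatorname{Lip}_S$ for the Lipschitz constant over a set $S$, this is the identity
\[
\operatorname{Lip}_{\overline G}(\V)\;=\;\operatorname{Lip}_{\partial G}(\V).
\]

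Granting this identity, the rest is three short steps. First I would compute the Lipschitz constant of the boundary datum $g$, which is $0$ on $\partial\Omega$ and $1$ on $\Gamma$: the difference quotient $|g(\zeta)-g(\zeta')|/|\zeta-\zeta'|$ vanishes when $\zeta,\zeta'$ lie on the same component of $\partial G=\partial\Omega\cup\Gamma$ and equals $1/|\xi-\eta|$ when $\zeta=\xi\in\partial\Omega$, $\zeta'=\eta\in\Gamma$, so its supremum is $1/\inf\{|\xi-\eta|:\xi\in\partial\Omega,\ \eta\in\Gamma\}=1/\dist(\Gamma,\partial\Omega)$ (positive, since $K$ is compact in the open set $\Omega$). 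Second, the displayed identity turns this into $|\V(x)-\V(y)|\le|x-y|/\dist(\Gamma,\partial\Omega)$ for all $x,y\in\overline G$. Third, since $\nabla\V$ exists at every point of $G$, I would fix $x\in G$ and a unit vector $\mathbf e$ and let $t\to0^+$ in $|\V(x+t\mathbf e)-\V(x)|\le t/\dist(\Gamma,\partial\Omega)$ to obtain $|\langle\nabla\V(x),\mathbf e\rangle|\le1/\dist(\Gamma,\partial\Omega)$; maximizing over $\mathbf e$ completes the proof.

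The one genuinely substantive ingredient is the Lipschitz-preservation identity, and this is where I expect the work to lie. It is the classical fact that an $\infty$-harmonic function is an \emph{absolutely minimizing} Lipschitz extension, so that $\operatorname{Lip}_V(\V)=\operatorname{Lip}_{\partial V}(\V)$ for every $V\subset\subset G$; applying this with $V=\{x:\dist(x,\partial G)>\delta\}$ and letting $\delta\to0$ (using the uniform continuity of $\V$ on $\overline G$) yields $\operatorname{Lip}_{\overline G}(\V)=\operatorname{Lip}_{\partial G}(\V)$. I would cite \cite{A1} for this; if a self-contained proof is preferred, it can be extracted from comparison with cones, the relevant cones being $g(\zeta)\pm|x-\zeta|/\dist(\Gamma,\partial\Omega)$ with vertex $\zeta\in\partial G$ lying off $G$ (so that they are $\infty$-harmonic in $G$) and straddling $\V$ on $\partial G$. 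Everything else — the evaluation of the boundary Lipschitz constant and the differentiation limit — is routine; and the bound is sharp, as the disk $\Omega$ centred at $\Gamma$ shows, where $\V(x)=1-|x|/\dist(\Gamma,\partial\Omega)$ and $|\nabla\V|\equiv1/\dist(\Gamma,\partial\Omega)$.
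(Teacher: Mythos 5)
Your proposal is correct and rests on the same idea as the paper's proof: the lower bound is quoted from Lewis, and the upper bound comes from the optimality of $\V$ as a Lipschitz extension, with $1/\dist(\Gamma,\partial\Omega)$ entering precisely as the Lipschitz constant of the boundary data. The only (immaterial) difference is in how optimality is exploited: the paper compares $\|\nabla\V\|_{\infty,G}$ directly with the explicit competitor $v(x)=\min\{1,\dist(x,\partial\Omega)/\dist(\Gamma,\partial\Omega)\}$, whose gradient has norm $1/\dist(\Gamma,\partial\Omega)$ a.e., whereas you route through the identity $\operatorname{Lip}_{\overline G}(\V)=\operatorname{Lip}_{\partial G}(\V)$ and a difference quotient --- a marginally heavier statement (since $G$ is not convex) but equally classical and correctly handled.
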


\bigskip

\begin{proof} That $\nabla \V \neq 0$ is proved in \cite{L}, see also \cite{Ja}. This is a simple consequence of the convexity  of the level curves.\footnote{Actually, one has
    $$|\nabla\V(x)|\,\geq\,|\V(x)|\mathrm{diam}(\Omega)^{-1}.$$}

  Since $\V$ is an optimal extension of its boundary values,
  $$\|\nabla \V\|_{\infty,G}\,\leq\,\|\nabla v\|_{\infty,G}$$
  for every Lipschitz function $v\in C(\overline{G})$ with the same boundary values as $\V$. The distance function
  $$v(x)\,=\,\min\left\{1,\frac{\mathrm{dist}(x,\partial \Omega)}{\delta}\right\}, \quad\text{where}\quad \delta\,=\, \mathrm{dist}(\Gamma,\partial \Omega)$$
  will do. Now $|\nabla v| = 1/\delta$ almost everywhere. The upper bound follows.\end{proof}

  \begin{lemma}\label{topp} We have
    $$\liminf_{x \to \xi}|\nabla \V(x)|\,\geq\,\beta\,>\,0\quad\text{whenever}\quad \xi \in \Gamma$$
    where the constant $\beta =  \mathrm{diam}(\Omega)^{-1}$.
  \end{lemma}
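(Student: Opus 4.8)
The plan is to use the footnote's hint from Lemma~\ref{nonzero}, namely the pointwise lower bound $|\nabla \V(x)| \geq |\V(x)|\,\mathrm{diam}(\Omega)^{-1}$, and simply pass to the limit as $x \to \xi \in \Gamma$. Since $\V$ is continuous on $\overline{G}$ and attains the boundary value $1$ on $\Gamma$, we have $\V(x) \to 1$ as $x \to \xi$, so the bound $|\nabla\V(x)| \geq \V(x)\,\mathrm{diam}(\Omega)^{-1}$ gives $\liminf_{x\to\xi}|\nabla\V(x)| \geq \mathrm{diam}(\Omega)^{-1} =: \beta > 0$, uniformly in $\xi \in \Gamma$. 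So the real content is establishing the pointwise inequality $|\nabla \V(x)| \geq \V(x)/\mathrm{diam}(\Omega)$.

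To prove that pointwise bound I would exploit the convexity of the super-level sets $\{\V > c\}$, which is one of the quoted properties of Lewis. Fix $x_0 \in G$ and set $c = \V(x_0)$, so $x_0 \in \partial\{\V > c\}$ (using that $\nabla\V \neq 0$, the level set $\{\V = c\}$ is the boundary of the convex body $\{\V \geq c\}$). Since $\{\V > c\}$ is convex and contains $\Gamma$ (where $\V = 1 > c$), there is a supporting line $\ell$ to this convex set at $x_0$; let $\mathbf{n}$ be the inward unit normal at $x_0$. The ascending streamline through $x_0$ is orthogonal to the level curve, so $\nabla\V(x_0) = |\nabla\V(x_0)|\,\mathbf{n}$. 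Now compare $\V$ with the affine function $L(x) = c + |\nabla\V(x_0)|\,\langle x - x_0, \mathbf{n}\rangle$: I claim $\V \geq L$ is false in general but what is true and useful is a one-dimensional comparison along a segment. The cleanest route: consider the point $\xi^* \in \Gamma$ and note that along the segment from $x_0$ to any boundary point, or better, use that $\V$ restricted to the line through $x_0$ in direction $\mathbf{n}$ is concave on the portion inside $\{\V \le c\}$ side — actually the superharmonicity of $\V$ and convexity of level sets together force $\V$ to be dominated along rays by its linear Taylor expansion in the gradient direction, so that traveling a distance at most $\mathrm{diam}(\Omega)$ from $x_0$ to $\Gamma$, the value can rise from $c$ to at most $1$, giving $1 - c \leq |\nabla\V(x_0)| \cdot \mathrm{diam}(\Omega)$, i.e. $|\nabla\V(x_0)| \geq (1-c)/\mathrm{diam}(\Omega) \geq c/\mathrm{diam}(\Omega)$ when $c \le 1/2$; the constant can then be cleaned up. Alternatively, and more honestly for this paper, I would just cite Lemma~\ref{nonzero} and its footnote as already-established, since that is where the estimate $|\nabla\V| \ge |\V|\,\mathrm{diam}(\Omega)^{-1}$ is asserted, and then the limit argument is a two-line matter.

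The main obstacle is the justification of the pointwise gradient lower bound $|\nabla\V(x)| \ge \V(x)/\mathrm{diam}(\Omega)$ itself: it requires combining the convexity of the level sets with a comparison principle, and care is needed because $\V$ has no second derivatives. The way around this is to work with the geometry of the convex super-level sets and the fact that $\V$ is $\infty$-harmonic hence enjoys comparison with cones (the classical Aronsson property), which directly yields the needed linear-growth estimate from $x_0$ toward $\Gamma$. Once that pointwise bound is in hand — or simply quoted from Lemma~\ref{nonzero} — the passage to the boundary limit is immediate from continuity of $\V$ up to $\Gamma$ and the fact that $\V \equiv 1$ on $\Gamma$, and the resulting constant $\beta = \mathrm{diam}(\Omega)^{-1}$ does not depend on the chosen point $\xi \in \Gamma$, which is exactly the uniform statement required.
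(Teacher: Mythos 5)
Your overall strategy is the paper's: reduce everything to the pointwise estimate $|\nabla\V(x)|\ge \V(x)/\mathrm{diam}(\Omega)$ and then let $x\to\xi$, using $\V\in C(\overline G)$ and $\V=1$ on $\Gamma$. The paper's proof of that pointwise bound is exactly the supporting-line construction you begin to set up: with $c=\V(x_0)$ and $\ell$ a supporting line of the convex set $\{\V>c\}$ at $x_0$, one compares $\V$, \emph{on the outer side of $\ell$} (the side containing part of $\partial\Omega$ and none of $K$), with the affine function $L(x)=c\bigl(1-\mathrm{dist}(x,\ell)/\mathrm{diam}(\Omega)\bigr)$. On $\ell\cap\Omega$ one has $L=c\ge\V$ by convexity of the level set, and on the relevant part of $\partial\Omega$ one has $L\ge 0=\V$ because no point of $\Omega$ lies farther than $\mathrm{diam}(\Omega)$ from $\ell$; since affine functions are $\infty$-harmonic, comparison gives $\V\le L$ there with equality at $x_0$, and the one-sided derivative along the outward normal then yields $|\nabla\V(x_0)|\ge c/\mathrm{diam}(\Omega)$.

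The gap is that you never carry out this comparison; the one you actually write runs in the opposite direction, ``from $x_0$ toward $\Gamma$,'' and produces $1-c\le|\nabla\V(x_0)|\,\mathrm{diam}(\Omega)$, i.e.\ the lower bound $(1-c)/\mathrm{diam}(\Omega)$. That constant degenerates exactly in the regime needed for Lemma \ref{topp}, where $c=\V(x)\to 1$; your own caveat that it dominates $c/\mathrm{diam}(\Omega)$ only ``when $c\le 1/2$'' concedes the point, and no cleanup of constants can rescue a bound that tends to $0$. The mechanism you invoke for it is also not sound: comparison with cones gives monotonicity in $r$ of the maximal difference quotients over spheres, which furnishes an \emph{upper} bound for $|\nabla\V(x_0)|$ in terms of boundary data, not a lower one, and superharmonicity does not make $\V$ ``dominated along rays by its linear Taylor expansion.'' Finally, the fallback of citing the footnote to Lemma \ref{nonzero} is circular, since that footnote is an unproved parenthetical and the barrier argument above is precisely what substantiates it. Run the barrier toward $\partial\Omega$ (where $\V=0$) instead of toward $\Gamma$ and your proof closes, coinciding with the paper's.
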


  \bigskip

  \begin{proof} A simple geometric reasoning provides this. Since the level curves are convex, a level set always lies entirely on one side of the tangent lines. This makes it possible to construct a linear function which lies above $\V$ in that part of $\Omega$ which is on the outer side of  a tangent and which coincides with $\V(\xi)$ at the tangent point $\xi$. The slope of the plane can be taken to be $\leq \V(\xi)/\mathrm{diam}(\Omega)$ and now $\V(\xi) = 1$. (The reader may wish to draw a picture.) Then the comparison principle yields the estimate.\end{proof}

\begin{prop}\label{beta} Let $\Gamma$ be a single point, say $\Gamma = \{0\}.$ Then
 $$
    \lim_{x \to 0}|\nabla \V(x)|\, =\, \sup_{G}\{|\nabla  \V|\}. 
$$
\end{prop}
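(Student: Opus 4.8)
The plan is to reduce the statement to a one‑point blow‑up at $\Gamma$ and to identify the blow‑up with a cone. Throughout write $\delta=\dist(\Gamma,\partial\Omega)$ (so $\Gamma=\{0\}$ and $B_\delta(0)\subset\Omega$) and $L=\sup_G|\nabla\V|$; by Lemma~\ref{nonzero}, $L\le 1/\delta$. Fix $p\in\partial\Omega$ with $|p|=\delta$. Since $B_\delta(0)\subset\Omega$ and $p$ lies on it at distance $\delta$, the supporting line of $\Omega$ at $p$ is at distance exactly $\delta$ from $0$, hence equals $\{\langle x,\hat p\rangle=\delta\}$ with $\hat p=p/\delta$, and $\Omega\subset\{\langle x,\hat p\rangle\le\delta\}$. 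The open segment $(0,p)$ lies in $G$; as $\V$ runs from $\V(0)=1$ to $\V(p)=0$ along it over length $\delta$ while being $L$‑Lipschitz, one gets $1\le L\delta$, so $L=1/\delta$, and equality in the mean‑value/Cauchy--Schwarz estimate forces $\V(x)=1-|x|/\delta$ and $\nabla\V(x)=-x/(\delta|x|)$ on all of $(0,p)$, i.e.\ the segment is a streamline on which $|\nabla\V|\equiv 1/\delta$. Since $|\nabla\V|\le 1/\delta$ always, the proposition amounts to $\liminf_{x\to0}|\nabla\V(x)|\ge 1/\delta$.

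Next I would blow up at $\Gamma$. Comparing $\V$ with the $\infty$‑harmonic functions $1-|x|/\delta$ (which is $\le\V$ on $\partial G$, as $|x|\ge\delta$ on $\partial\Omega$) and $1-\langle x,\hat p\rangle/\delta$ (which is $\ge\V$ on $\partial G$, by the supporting‑line property and $\V=0$ on $\partial\Omega$) yields $1-|x|/\delta\le\V(x)\le1-\langle x,\hat p\rangle/\delta$ in $G$. For $\lambda\downarrow0$ put $\V_\lambda(x)=(1-\V(\lambda x))/\lambda$; then $\V_\lambda$ is $\infty$‑harmonic in $\lambda^{-1}\Omega\setminus\{0\}$, obeys the uniform sandwich $\langle x,\hat p\rangle/\delta\le\V_\lambda(x)\le|x|/\delta$ and $|\nabla\V_\lambda|\le 1/\delta$, has convex sublevel sets $\{\V_\lambda<c\}$, and equals $|x|/\delta$ on the ray $\{t\hat p:t>0\}$. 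By Arzel\`a--Ascoli, stability of viscosity solutions, and the interior $C^{1,\alpha}$ estimate, some subsequence converges in $C^1_{\mathrm{loc}}(\mathbb{R}^2\setminus\{0\})$ to an $\infty$‑harmonic $\V_0$ inheriting all of these properties.

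Finally I would identify $\V_0(x)=|x|/\delta$. From the sandwich, $\V_0$ lies above the affine function $\ell(x)=\langle x,\hat p\rangle/\delta$, whose gradient has the maximal length $1/\delta=\mathrm{Lip}(\V_0)$, and $\V_0$ coincides with $\ell$ (hence with the cone $|x|/\delta$) on the ray. Granting that the blow‑up is homogeneous of degree one, write $\V_0(x)=|x|\,g(x/|x|)$; the planar $\infty$‑Laplacian then reduces to $g'(g''+g)=0$ on $S^1$, and $g\ge0$ together with $g\le 1/\delta$ and $g(\hat p)=1/\delta$ rules out any arc carrying a nontrivial solution of $g''+g=0$ (such an arc would drive $g$ down to a zero, and a zero of the nonnegative $C^1$ profile $g$ propagates — through the ODE — to $g\equiv0$, contradicting $g(\hat p)>0$); hence $g'\equiv0$ and $g\equiv 1/\delta$. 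As $|x|/\delta$ is then the only possible subsequential blow‑up limit, $\V_\lambda\to|x|/\delta$ in $C^1_{\mathrm{loc}}(\mathbb{R}^2\setminus\{0\})$, and restricting to $|x|=1$ gives $\sup_{|\omega|=1}\big||\nabla\V(\lambda\omega)|-1/\delta\big|\to0$, i.e.\ $|\nabla\V(x)|\to1/\delta$ as $x\to0$. I expect this last step to be the main obstacle: the soft comparison bounds and the convexity of the level curves do not by themselves round the level curves out near $\Gamma$, and it is exactly in proving the homogeneity of the blow‑up — equivalently, the uniqueness of the blow‑up of $\V$ at $\Gamma$ — that the full strength of $\infty$‑harmonicity has to be used.
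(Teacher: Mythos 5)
Your first paragraph and the comparison set-up are correct and even a bit more informative than the paper's own argument: the elementary Lipschitz computation along the segment $(0,p)$ gives $\sup_G|\nabla\V|=1/\dist(\Gamma,\partial\Omega)$ directly (the paper only extracts this afterwards, in Corollary~\ref{singledist}, from the AMLE property), and the two comparison functions $1-|x|/\delta$ and $1-\langle x,\hat p\rangle/\delta$ are legitimately $\infty$-harmonic barriers. The problem is the last step, and you have diagnosed it yourself: the phrase ``granting that the blow-up is homogeneous of degree one'' is not a step of a proof but the statement of the missing theorem. Without homogeneity (equivalently, uniqueness of the blow-up), all your sandwich gives is a family of subsequential limits $\V_0$ trapped between a supporting plane and the cone $|x|/\delta$ and agreeing with both on a single ray; that does not exclude a nonhomogeneous limit, nor different limits along different subsequences, and in that case $\liminf_{x\to 0}|\nabla\V(x)|$ could a priori drop below $1/\delta$ in directions away from $\hat p$. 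The subsequent $S^1$-ODE classification $g'(g''+g)=0$ is itself only valid once homogeneity is known (and, strictly speaking, needs a justification that the viscosity property descends to that ODE), so the whole identification of $\V_0$ rests on the unproved claim.

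The paper closes exactly this gap by citation rather than by argument: Theorem~1 of \cite{SWY} asserts that an $\infty$-harmonic function near an isolated singularity satisfies $\V(x)=1-c|x|+o(|x|)$ with $c=\sup_G|\nabla\V|$ (this is the homogeneity/uniqueness of the blow-up you are missing), and Theorem~2 of \cite{SWY} upgrades this $L^\infty$-flatness to closeness of gradients, giving $\nabla\V(rz)\to -cz/|z|$ and hence $|\nabla\V(x)|\to c$. Your compactness-plus-interior-$C^{1,\alpha}$ route can replace the second citation for subsequences, but the full limit again needs uniqueness of the blow-up. So as written the proposal is a correct reduction of the proposition to the Savin--Wang--Yu asymptotic expansion, not a proof of it; to complete it along your lines you would have to reprove that expansion (e.g., via a monotonicity argument for $\sup_{|x|=r}\frac{1-\V(x)}{r}$ and comparison with cones), which is precisely the ``full strength of $\infty$-harmonicity'' you allude to.
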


\begin{proof} By Theorem 1 in \cite{SWY}
$$\lim_{x \to 0}\dfrac{\V(x)-1+c|x|}{|x|}\,=\,0,\qquad c\,=\,\sup_{G}\{|\nabla   \V|\},$$
and $c>0$. Let $\varepsilon > 0$. Writing $x = r(y+z)$ where $r > 0,\quad |y| < 1,$ and $|z|=1,$ we have
$$\left\vert \V \bigl(r(y+z)\bigr)-1-cr|y+z|\right\vert\,\leq \,\varepsilon r|y+z|\,<\,2 \varepsilon r$$
for $0 < r < r_{ \varepsilon}$ (= some number $< 1$). Keep $|z|=1$ fixed. Dividing out $r$ we get
\begin{equation}\label{flat}
  \sup_{B(z,1)}\left\vert\frac{\V(rx)-1}{r}-c|x|\right\vert\,<\,2 \varepsilon
  \end{equation}
when $0 < r <   r_{ \varepsilon}$. According to Theorem 2 in \cite{SWY},
%\cite{SWY}\footnote{Strictly speaking, we have to keep $|z-x|<|z|=\rho$, say, in order to prevent $x=0$. Thus the supremum is over $B(z,\rho).$ A scaling fixes this.}
inequality (\ref{flat}) implies that for any $\delta > 0$ we can find an $ \varepsilon_{\delta}$ such that 
$$\left\vert\nabla  \Bigl( \frac{\V(rx)-1}{r}\Bigr) - \nabla(c\,|x|)\right\vert_{x=z}\,<\,\delta\quad\text{when}\quad0<    \varepsilon <  \varepsilon_{\delta}  $$
which is equivalent to
$$\left\vert\nabla \V(rz)-c\frac{z}{|z|}\right\vert\,<\,\delta.$$
This holds for all $0<r<r_{\varepsilon}$ where $0<\varepsilon<  \varepsilon_{\delta}$ and hence it follows as $r \to 0$ that
$$  \lim_{x \to 0}|\nabla \V(x)|\, =\,c >0,$$
as desired. \end{proof}

\bigskip

\begin{cor}\label{singledist} Under the same assumptions as in Proposition \ref{beta},
  $$\lim_{x \to 0}|\nabla \V(x)|\,=\,\frac{1}{\mathrm{dist}(\Gamma,\partial \Omega)}.$$
\end{cor}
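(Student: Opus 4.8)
The plan is to read off the Corollary from Proposition~\ref{beta} together with Lemma~\ref{nonzero}, the only real work being a matching \emph{lower} bound for the supremum of the speed. Put $\delta=\dist(\Gamma,\partial\Omega)=\dist(0,\partial\Omega)$ and $c=\sup_G|\nabla\V|$. By Lemma~\ref{nonzero} we already know $c\le 1/\delta$, and by Proposition~\ref{beta} we know $\lim_{x\to0}|\nabla\V(x)|=c$. Hence it suffices to establish the reverse inequality $c\ge 1/\delta$; then $\lim_{x\to0}|\nabla\V(x)|=c=1/\delta$, as claimed.

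For the lower bound I would integrate the gradient along a shortest segment from the origin to the outer boundary. Pick $\xi\in\partial\Omega$ with $|\xi|=\delta$. Since $\Omega$ is open and convex with $0\in K\subset\Omega$ and $\xi\in\overline{\Omega}$, the half-open segment $\{t\xi:0\le t<1\}$ lies in $\Omega$, and as it meets $\Gamma=\{0\}$ only at $t=0$, the open segment $\{t\xi:0<t<1\}$ lies in $G$. The function $g(t)=\V(t\xi)$ is then $C^1$ on $(0,1)$ with $g'(t)=\langle\nabla\V(t\xi),\xi\rangle$, it satisfies $|g'(t)|\le|\nabla\V(t\xi)|\,|\xi|\le|\xi|/\delta=1$ by Lemma~\ref{nonzero}, and it extends continuously to $[0,1]$ with $g(0)=\V(0)=1$ (the boundary value on $\Gamma$) and $g(1)=\V(\xi)=0$. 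Thus $g$ is Lipschitz on $[0,1]$ and the fundamental theorem of calculus gives
$$-1=g(1)-g(0)=\int_0^1\langle\nabla\V(t\xi),\xi\rangle\,dt .$$
A continuous function whose integral over an interval of length one equals $-1$ must attain a value $\le -1$ somewhere, so there is $t^\ast\in(0,1)$ with $\langle\nabla\V(t^\ast\xi),\xi\rangle\le -1$; by the Cauchy--Schwarz inequality $|\nabla\V(t^\ast\xi)|\,|\xi|\ge 1$, i.e. $|\nabla\V(t^\ast\xi)|\ge 1/\delta$. Hence $c=\sup_G|\nabla\V|\ge 1/\delta$, and the Corollary follows.

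I do not expect a genuine obstacle. The only point that calls for a word of care is the passage to the endpoints in the fundamental theorem of calculus, namely knowing that $g$ is absolutely continuous on the \emph{closed} interval $[0,1]$ and not merely $C^1$ on $(0,1)$; this is immediate from the uniform gradient bound of Lemma~\ref{nonzero} together with $\V\in C(\overline{G})$, and if one prefers one may integrate over $[\sigma,1-\sigma]$ and let $\sigma\to0^+$, using the continuity of $\V$ up to $\overline{G}$ to pass to the limit. It is worth noting that the lower bound cannot be obtained from Lemma~\ref{topp} alone, which only yields $c\ge\mathrm{diam}(\Omega)^{-1}\le\delta^{-1}$; the sharpness here comes precisely from choosing the \emph{shortest} segment to $\partial\Omega$, along which $\V$ must drop from $1$ to $0$ over length exactly $\delta$.
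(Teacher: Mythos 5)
Your proposal is correct, and it reaches the result by the same top-level reduction as the paper (Proposition~\ref{beta} converts the limit at the origin into $\sup_G|\nabla\V|$, so everything hinges on showing $\sup_G|\nabla\V|=1/\dist(\Gamma,\partial\Omega)$), but you justify that identity differently. The paper disposes of it in one line by invoking the optimality of the Lipschitz extension: $\|\nabla\V\|_{\infty}$ equals the Lipschitz constant of the boundary data, which is $1/\dist(\Gamma,\partial\Omega)$ since the data drops from $1$ to $0$ over that distance. You instead take only the easy half of that principle --- the upper bound $|\nabla\V|\le 1/\delta$ from Lemma~\ref{nonzero}, itself proved by comparison with a distance-function competitor --- and supply the matching lower bound by hand, integrating $\langle\nabla\V(t\xi),\xi\rangle$ along the shortest segment from $0$ to $\partial\Omega$ and observing that a function bounded below by $-1$ whose integral over a unit interval is $-1$ must touch $-1$. (In fact your argument shows $\langle\nabla\V(t\xi),\xi\rangle\equiv-1$, i.e.\ the speed is identically $1/\delta$ along that whole segment, slightly more than you need.) The trade-off: the paper's proof is shorter but leans on the nontrivial fact that the AMLE realizes exactly the boundary Lipschitz constant, whereas yours is self-contained modulo Lemma~\ref{nonzero} and elementary calculus. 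Your endpoint care (uniform Lipschitz bound on the open segment plus continuity of $\V$ up to $\overline{G}$) is exactly what is needed, and your remark that Lemma~\ref{topp} is too weak here is accurate, since $\mathrm{diam}(\Omega)\ge 2\,\dist(\Gamma,\partial\Omega)$.
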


\bigskip

\begin{proof}  Since the function $\V$ is an optimal Lipschitz extension of its boundary data, it follows from Proposition \ref{beta} that 
$$
\|\nabla \V\|_{\infty(\Omega)}=\|\V\|_{\text{Lip}(\Gamma\cup \partial \Omega)}=\frac{1}{\dist(\Gamma,\Omega)}.
$$
\end{proof}

If $\Gamma$ is part of the High Ridge, it must be a point or a segment of a straight line. Corollary \ref{singledist} can be extended to this case. 
\begin{prop}\label{HRprop} Let $\Gamma$ be a segment on the High Ridge of $\Omega$. Then
  $$
    \lim_{x \to\xi }|\nabla \V (x)|\, =\, \frac{1}{\mathrm{dist}(\Gamma,\partial \Omega)}, \,\quad \xi \in \Gamma.
$$
\end{prop}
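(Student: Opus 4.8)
Write $R=\mathrm{dist}(\Gamma,\partial\Omega)$. By Lemma~\ref{nonzero} we have $|\nabla\V|\le 1/R$ throughout $G$, so $\limsup_{x\to\xi}|\nabla\V(x)|\le 1/R$ automatically, and it suffices to establish the reverse inequality; in fact the plan is to show that $|\nabla\V|$ is identically $1/R$ in a punctured neighbourhood of $\Gamma$. The key preliminary is a one-sided bound by the distance function. Since $\Omega$ is convex, $x\mapsto\mathrm{dist}(x,\partial\Omega)$ is concave on $\Omega$ — it is the infimum of the affine functions lying above it — hence it is $\infty$-superharmonic in the viscosity sense. On $\partial G=\partial\Omega\cup\Gamma$ it equals $0$ on $\partial\Omega$ and, because $\Gamma$ lies on the High Ridge, the constant $R$ on $\Gamma$; thus $\tfrac1R\mathrm{dist}(\cdot,\partial\Omega)$ and $\V$ agree on $\partial G$. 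Jensen's comparison principle for $\Delta_\infty$, applied to the subsolution $\V$ and the supersolution $\tfrac1R\mathrm{dist}(\cdot,\partial\Omega)$, then gives
\[
\V(x)\le\frac1R\,\mathrm{dist}(x,\partial\Omega),\qquad x\in G.
\]

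Next I localise near $\Gamma$. Fix a point $\xi$ in the relative interior of the segment $\Gamma$ and choose coordinates with $\Gamma$ on the $x_1$-axis and unit normal $e_2$. Because $\mathrm{dist}(\cdot,\partial\Omega)$ attains its maximum along $\Gamma$ and is locally constant there, a short convexity argument shows that the two nearest boundary points of $\xi$ are $\xi\pm Re_2$, that the lines $\{x_2=\pm R\}$ support $\Omega$, and that $\partial\Omega$ contains small horizontal segments through $\xi\pm Re_2$; hence $\mathrm{dist}\bigl((t,s),\partial\Omega\bigr)=R-|s|$ for $(t,s)$ in a neighbourhood of $\xi$. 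Together with the comparison above and with $\V\equiv 1$ on $\Gamma$, this gives, for such $(t,s)$ with $(t,0)\in\Gamma$ and $s>0$,
\[
\frac sR\;\le\;1-\V(t,s)\;=\;\int_0^s\bigl(-\partial_{x_2}\V(t,\sigma)\bigr)\,d\sigma\;\le\;\int_0^s|\nabla\V(t,\sigma)|\,d\sigma\;\le\;\frac sR .
\]
So every inequality is an equality: $-\partial_{x_2}\V(t,\sigma)=1/R$ for almost every $\sigma\in(0,s)$, hence for all $\sigma$ by continuity of $\nabla\V$, and then $|\nabla\V|\le 1/R$ forces $\nabla\V(t,\sigma)=(0,-1/R)$, so $|\nabla\V|=1/R$ there. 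The same computation with $s<0$ covers the other side, and we conclude $|\nabla\V|\equiv 1/R$ in a full punctured neighbourhood of the relative interior of $\Gamma$, which yields $\lim_{x\to\xi}|\nabla\V(x)|=1/R$ for every such $\xi$.

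The endpoints of $\Gamma$ must be treated separately: there one combines the continuity of $\nabla\V$, the rigidity just obtained on the adjacent interior part of $\Gamma$, and the analogous pinching argument carried out along the segment joining the endpoint to a nearest point of $\partial\Omega$, along which $\mathrm{dist}(\cdot,\partial\Omega)$ still decreases at the maximal rate $1$. The main obstacle is exactly this rigidity/propagation step: the integral identity above pins down $|\nabla\V|$ only along one-dimensional segments, and upgrading it to an honest two-dimensional neighbourhood of all of $\Gamma$ — in particular near its endpoints — is the delicate part; a secondary point needing care is the geometric claim that $\Gamma\subset H$ forces the slab structure of $\Omega$ which makes $\mathrm{dist}(\cdot,\partial\Omega)=R-|x_2|$ near $\Gamma$.
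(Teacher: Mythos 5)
Your argument for interior points of $\Gamma$ is sound and is essentially the paper's identity \eqref{rectconc} obtained by a slightly different sandwich: you pinch $1-\V$ between the comparison bound $\V\le\tfrac1R\mathrm{dist}(\cdot,\partial\Omega)$ and the gradient bound of Lemma \ref{nonzero}, where the paper instead traps $\V$ between $\mathrm{dist}(\cdot,\partial S)$ for an inscribed stadium $S$ and $\mathrm{dist}(\cdot,\partial\Omega)$. Both routes give $\V=1-|x_2|/R$ on the rectangle over $\Gamma$, and hence the claimed limit at every $\xi$ in the relative interior of the segment.

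The genuine gap is the endpoint case, which you flag but do not close. Near an endpoint, say $(a,0)$, points of $G$ approach from the region $\{x_1>a\}$, where the slab structure $\mathrm{dist}(x,\partial\Omega)=R-|x_2|$ fails and your one-dimensional pinching identity controls $|\nabla\V|$ only along isolated segments, not along every sequence $x\to(a,0)$. This is not a routine continuity patch: the paper's proof devotes its second half precisely to this, showing by a gluing-and-comparison argument that in $\Omega\cap\{x_1\ge a\}$ the potential $\V$ coincides with the solution $u_R$ of problem \eqref{maineq} for the single inner point $\{(a,0)\}$, and then invoking Corollary \ref{singledist} --- which in turn rests on Proposition \ref{beta} and the Savin--Wang--Yu asymptotic expansion near an isolated singularity --- to conclude $\lim_{x\to(a,0)}|\nabla u_R(x)|=1/R$. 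Without some substitute for that blow-up input (or the reduction to the single-point case), the full limit $\lim_{x\to\xi}|\nabla\V(x)|=1/R$ at the endpoints of $\Gamma$ does not follow from the slab computation alone, so the proposal as written does not prove the proposition.
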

\begin{proof} We normalize the geometry so that $\Gamma$ is the closed segment joining the points $(\pm a,0)$ on the $x_1$-axis and $\mathrm{dist}(\xi,\partial \Omega) =1$ whenever $\xi \in \Gamma.$ Now $\V(x)\leq\mathrm{dist}(x,\partial \Omega)$. Construct the largest stadium $S$ with $\Gamma$ as its High Ridge which is contained in $\Omega.$  That is, 
$$
S=\{x\in \Omega|\, \dist(x,\Gamma)<1\}.
$$It follows by comparison
  that
  $$\mathrm{dist}(x,\partial S)\,\leq\,\V(x)\,\leq \mathrm{dist}(x,\partial \Omega),\,\quad x\in S,$$
  because $\mathrm{dist}(x,\partial S)$ is $\infty$-harmonic in $S\setminus\Gamma$.
  
  In particular, since the domain is convex, these functions coincide on a rectangle:
  \begin{equation}
  \label{rectconc}
  \mathrm{dist}(x,\partial S)\,=\,\V(x)\,= \mathrm{dist}(x,\partial \Omega)=1-|x_2|
  \end{equation}
  for $-a\,\leq\,x_1\,\leq a$ and $-1\,\leq x_2\,\leq 1$. (Draw the unit discs with centers $(\pm a,0)$ to see that the points $(\pm a,\pm 1)$ are the corners of a rectangle in $\Omega$.)
   
  As we shall see, $\V$ is glued together of three pieces (inspired by the example in Section 5 of \cite{JLM2}). Let
  $u_L$ be the solution of (\ref{maineq}) with $\Gamma=\{(-a,0)\}$. Similarly, we define $u_R$ with $\Gamma=\{(a,0)\}$. Now Corollary \ref{singledist} implies
  $$\lim_{x\to (-a,0)}|\nabla u_L(x)| \,=\,1, \quad \lim_{x\to (+a,0)}|\nabla u_R(x)| \,=\,1.$$
      We claim that in $\Omega$
      $$
      \V(x)\,=\,\begin{cases} u_R(x),\quad &x_1 \geq a\\
      \mathrm{dist}(x,\partial \Omega),\quad &a\geq x_1\geq -a\\
      u_L(x)\quad &x_1\leq -a.
      \end{cases}
      $$
      First, it is continuous. Second, it is $\infty$-harmonic in $\Omega\cap\{|x_1|>a\}$ and when $|x_1|\leq  a$ the function coincides with $V_\infty$ by \eqref{rectconc}. The desired result follows by comparison.
\end{proof}

%%%%%%%%%%%%%%%%%%%%%%%%%%%%%%%%%%%%%%%%%%%%%%%%%%%%%%%%%%%%%%

\section{Proofs of the Theorems}
\begin{proof}[~Proof of Theorem \ref{uniqueness}.] Assume that two streamlines $x_1(t)$ and $x_2(t)$   for the ascending gradient flow in equation (\ref{asc}) emerge at a point $x_{Cl} \in G$. If they intersect some level curve at the points $y_1$ and $y_2$ and $y_1\neq y_2$, then we apply the fundamental inequality (\ref{normal}) to the domain $D$ bounded by  parts of the three curves
  $x_1(t), x_2(t)$, and the level curve. Only the arcs with endpoints: $x_{Cl},y_1$, and $y_2$ count. 
  (One may think of a curved triangle). By  inequality (\ref{normal})
  $$0\,\geq \oint_{\partial D}|\nabla \V|^{p-2}\langle\nabla \V,\mathbf{n}\rangle\,ds\,=\,
  \int_{y_1}^{y_2}|\nabla \V|^{p-1}\,ds $$
  since naturally $\langle\nabla \V,\mathbf{n}\rangle = 0$ along the streamlines and
  $$\mathbf{n}\,=\,+ \frac{\nabla \V}{|\nabla \V|}$$
  is the outer unit normal along the level curve between the points $y_1$ and $y_2$. Since $\nabla \V$ is continuous, it must be identically $0$ along this level curve. This contradicts the fact that $\nabla \V \neq 0$ in $G$. Hence we must have $y_1 =y_2$ and so the streamlines coincide: $x_1(t)\equiv x_2(t).$  \end{proof}

  \bigskip

 For a curved quadrilateral bounded by the arcs of two level curves and of two streamlines we have a convenient comparison for the supremum norm of $\nabla\V$ on the level arcs. The result indicates that such quadrilaterals cannot always exist, not if the  level difference is too big.

  \begin{lemma}\label{speedlevel} Assume  that
    \begin{itemize}
    \item the points $x_1$ and $x_2$  are on the same level curve $\V = a$,
    \item  the points $y_1$ and $y_2$ both are on the higher level curve $\V= b > a$,
    \item ascending streamlines join $x_1$ with $y_1$ and $x_2$ with $y_2$.
    \end{itemize}
    Then
    \begin{equation}\label{ab}
      \|\nabla  \V\|_{\infty,\overline{y_1y_2}}\,\leq \|\nabla  \V\|_{\infty,\overline{x_1x_2}},
    \end{equation}
    that is, the lower level curve  has the larger maximum norm for the gradient.
  \end{lemma}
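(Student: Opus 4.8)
The plan is to apply the fundamental inequality (\ref{normal}) directly to the curved quadrilateral $D$ bounded by the two level arcs $\overline{x_1x_2}\subset\{\V=a\}$, $\overline{y_1y_2}\subset\{\V=b\}$ and the two connecting streamlines, and then to let $p\to\infty$. First I would check that $D$ is an admissible domain. Along an ascending streamline $\tfrac{d}{dt}\V(x(t))=|\nabla\V(x(t))|^2>0$, so $\V$ is strictly increasing along it; hence each streamline meets $\{\V=a\}$ only at its lower endpoint and $\{\V=b\}$ only at its upper endpoint. If the two streamlines had a common point, Theorem \ref{uniqueness} together with the fact that streamlines cannot cross would force $y_1=y_2$ (or $x_1=x_2$, which again gives $y_1=y_2$ by ascending uniqueness), and the asserted inequality would be trivial; so we may assume the four arcs meet only at the four corners and $D$ is a Jordan domain. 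Moreover $0<a<b<1$ gives $\overline D\subset\{a\le\V\le b\}$, a compact subset of $G$ since $\V=0$ only on $\partial\Omega$ and $\V=1$ only on $\Gamma$; thus $D\subset\subset G$. The level arcs are $C^1$ (the level sets are $C^{1,\alpha}$ curves because $\V\in C^{1,\alpha}_{loc}$ and $\nabla\V\neq0$) and the streamline arcs are $C^1$ (arclength parametrized, $\tfrac{dx}{ds}=\nabla\V/|\nabla\V|$ is continuous), meeting at interior angles $\pi/2$, so $\partial D$ is Lipschitz and Proposition \ref{fundamental} applies.

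Next I identify the outer normal on $\partial D$. Since $\V$ increases toward $\Gamma$, the outer normal of $D$ is $\mathbf n=-\nabla\V/|\nabla\V|$ along $\overline{x_1x_2}$ and $\mathbf n=+\nabla\V/|\nabla\V|$ along $\overline{y_1y_2}$, while $\langle\nabla\V,\mathbf n\rangle=0$ along the two streamline arcs because there $\nabla\V$ is tangential. Hence (\ref{normal}) becomes
$$0\,\ge\,\oint_{\partial D}|\nabla\V|^{p-2}\langle\nabla\V,\mathbf n\rangle\,ds\,=\,\int_{\overline{y_1y_2}}|\nabla\V|^{p-1}\,ds\,-\,\int_{\overline{x_1x_2}}|\nabla\V|^{p-1}\,ds,$$
so that $\int_{\overline{y_1y_2}}|\nabla\V|^{p-1}\,ds\le\int_{\overline{x_1x_2}}|\nabla\V|^{p-1}\,ds$ for every $p\ge2$.

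Finally I pass to the limit. Put $M_a=\|\nabla\V\|_{\infty,\overline{x_1x_2}}$, $M_b=\|\nabla\V\|_{\infty,\overline{y_1y_2}}$, and let $L_a<\infty$ be the length of the convex arc $\overline{x_1x_2}$. We may assume $\overline{y_1y_2}$ is non-degenerate. Given $\delta>0$, continuity of $\nabla\V$ and compactness of $\overline{y_1y_2}$ provide a subarc of positive length $\ell_\delta$ on which $|\nabla\V|\ge M_b-\delta$. Then
$$\ell_\delta\,(M_b-\delta)^{p-1}\,\le\,\int_{\overline{y_1y_2}}|\nabla\V|^{p-1}\,ds\,\le\,\int_{\overline{x_1x_2}}|\nabla\V|^{p-1}\,ds\,\le\,L_a\,M_a^{p-1}.$$
Taking $(p-1)$-th roots and letting $p\to\infty$ (so that $\ell_\delta^{1/(p-1)}\to1$ and $L_a^{1/(p-1)}\to1$) yields $M_b-\delta\le M_a$, and $\delta\to0$ gives (\ref{ab}).

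The step I expect to be the main obstacle — in fact essentially the only nontrivial point — is the bookkeeping guaranteeing that $D$ is a genuine Lipschitz subdomain of $G$: the disjointness of the two streamline arcs (via Theorem \ref{uniqueness} and the non-crossing of streamlines) and the strict monotonicity of $\V$ along them, which together ensure the four arcs meet only at the corners. Once that is in place, the opposite signs of the two level-curve contributions in (\ref{normal}) are forced, and the passage $p\to\infty$ is routine.
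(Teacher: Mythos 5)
Your proposal is correct and follows essentially the same route as the paper: apply Proposition \ref{fundamental} to the curved quadrilateral, observe that the streamline arcs contribute nothing while the two level arcs enter with opposite signs, and then take $(p-1)$-th roots and let $p \to \infty$. The extra bookkeeping you supply (that $D\subset\subset G$ is a Lipschitz Jordan domain and the standard $L^{p}\to L^{\infty}$ limit) merely fills in details the paper leaves implicit.
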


  \bigskip

  \begin{proof} Use inequality (\ref{normal}) on the boundary of the domain $D$ bounded by the four arcs. The streamlines do not contribute to the line integral. Along the level arcs the outer normal has the directions $\pm \nabla \V$, the minus sign being for the lower arc between $x_1$ and $x_2$. This yields
  $$ \int_{y_1}^{y_2}|\nabla \V|^{p-1}\,ds \,\leq  \int_{x_1}^{x_2}|\nabla \V|^{p-1}\,ds.$$
  Taking the $p\!-\!1\,^{\mathrm{th}}$ roots and sending $p$ to $\infty$, we arrive at inequality (\ref{ab}).\qquad\end{proof}

  \bigskip

  \begin{proof}[Proof of Theorem \ref{Cl}.] The theorem follows from the above lemma. Indeed, let $\varepsilon > 0$ be very small. There is a strip near $\Gamma$, say $\mathrm{dist}(x,\Gamma) < l_{\varepsilon}$, where $|\nabla\V| > \beta - \varepsilon$. This strip contains all sufficiently high level curves. In a neighborhood of $\xi_0$ we have $|\nabla \V| < \alpha + \varepsilon$. If two different streamlines, starting at the same level curve in this neighborhood reach the strip without joining, then it follows from inequality (\ref{ab}) that we must have
  $$\beta-\varepsilon \leq \alpha + \varepsilon,$$
  which for a small $\varepsilon$ contradicts the assumption $\beta > \alpha.$ Therefore the streamlines must have joined before reaching the top level.
  \end{proof}

  \bigskip

We now prove a localized version of Theorem \ref{Cl}, which is Corollary \ref{extra}. In order to do that, we need the following equicontinuity of streamlines.

\begin{prop}[Convergence]\label{ascoli} Suppose that a sequence of streamlines
  $$\gamma_k \,=\,\gamma_k(t), \quad 0\leq t\leq T,\qquad (k=1,2,3,...)$$
  in $G$ is given. Then the family $\{\gamma_k\}$ is equicontinuous and bounded. Furthermore, if the initial points $\gamma_k(0)$ converge to a point $a\in G$, then the streamlines converge uniformly to the streamline via $a$.
\end{prop}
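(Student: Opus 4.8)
The plan is to deduce equicontinuity from the uniform gradient bound of Lemma \ref{nonzero}, then to use the uniqueness Theorem \ref{uniqueness} together with a compactness argument to upgrade pointwise convergence of initial data to uniform convergence of the streamlines. First I would observe that each $\gamma_k$ satisfies $\gamma_k'(t)=\nabla\V(\gamma_k(t))$, so $|\gamma_k'(t)|\le \|\nabla\V\|_{\infty,G}\le 1/\mathrm{dist}(\Gamma,\partial\Omega)=:L$ on the common interval $[0,T]$. Hence $|\gamma_k(t)-\gamma_k(s)|\le L|t-s|$: the family is uniformly Lipschitz, in particular equicontinuous, and it is bounded since it lives in the bounded set $\overline{G}$ (and, if one wants, stays a fixed distance from $\Gamma$ up to time $T$, so it remains in a compact subset of $G$).

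Next, assuming $\gamma_k(0)\to a\in G$, I would take an arbitrary subsequence. By Arzel\`a--Ascoli it has a further subsequence converging uniformly on $[0,T]$ to some Lipschitz curve $\gamma_\infty$ with $\gamma_\infty(0)=a$. The key step is to check that $\gamma_\infty$ is itself a streamline, i.e. solves $\gamma_\infty'(t)=\nabla\V(\gamma_\infty(t))$. This follows by passing to the limit in the integral form
\begin{equation*}
\gamma_{k}(t)=\gamma_{k}(0)+\int_0^t \nabla\V(\gamma_{k}(\tau))\,d\tau,
\end{equation*}
using that $\nabla\V$ is continuous on $G$ (indeed $C^\alpha_{loc}$), that the $\gamma_k$ converge uniformly, and that they remain in a fixed compact subset of $G$ on which $\nabla\V$ is uniformly continuous; dominated convergence then gives $\gamma_\infty(t)=a+\int_0^t\nabla\V(\gamma_\infty(\tau))\,d\tau$, so $\gamma_\infty$ is a streamline starting at $a$. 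By Theorem \ref{uniqueness} this streamline is \emph{unique}, hence independent of the subsequence. Since every subsequence of $\{\gamma_k\}$ has a further subsequence converging uniformly to this one and only limit, the full sequence $\gamma_k$ converges uniformly to the streamline through $a$.

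I expect the main obstacle to be the bookkeeping that keeps the curves inside a compact subset of $G$ where $\nabla\V$ is genuinely continuous (and nonvanishing): a priori a streamline could approach $\Gamma$, but since all $\gamma_k$ are defined on the \emph{same} interval $[0,T]$ with the same starting behaviour and $\gamma_k(0)\to a\in G$, the uniform speed bound $L$ confines them to $\{x:\mathrm{dist}(x,\Gamma)\ge \mathrm{dist}(a,\Gamma)-LT-o(1)\}$ once $k$ is large; shrinking $T$ if necessary, or arguing that the limit curve reaches $\Gamma$ only at $t=T$, one stays within a compact $K'\subset G$. The other point requiring a little care is that Theorem \ref{uniqueness} is stated for the forward flow emanating from a point, which is exactly what is needed here, so no extension is necessary. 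Everything else — the Lipschitz bound, Arzel\`a--Ascoli, and the subsequence-of-every-subsequence principle — is routine.
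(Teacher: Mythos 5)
Your proposal is correct and follows essentially the same route as the paper: the uniform Lipschitz bound from Lemma \ref{nonzero} gives equicontinuity and boundedness, Arzel\`a--Ascoli plus passage to the limit in the integral equation identifies any subsequential limit as a streamline through $a$, and Theorem \ref{uniqueness} upgrades subsequential to full convergence. Your extra care about keeping the curves in a compact subset of $G$ where $\nabla\V$ is uniformly continuous is a point the paper passes over silently, but it does not change the argument.
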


\bigskip

\begin{proof} Integrating the equation
$$\frac{d \gamma_k}{dt}\,=\,\nabla \V(\gamma_k(t))$$
we see that
$$|\gamma_k(t_2)-\gamma_k(t_1)|\,=\,\left\vert \int_{t_1}^{t_2}\!\nabla \V(\gamma_k(t))\,dt\right\vert \,\leq C|t_2-t_1| $$
by Lemma \ref{nonzero}. Also
$$|\gamma_k(t)|\,=\,\left\vert \int_{0}^{t}\!\nabla \V(\gamma_k(\tau))\,d\tau\right\vert\,\leq\, Ct\,\leq\,CT.$$
Hence the family is uniformly equicontinuous and bounded.

Thus we can apply Ascoli's Theorem to find a uniformly convergent subsequence, say
$$\gamma_{k_j} \to \gamma.$$ We may take the limit under the integral sign in
$$\gamma_{k_j}(t) - \gamma_{k_j}(0)\,=\,  \int_{0}^{t}\!\nabla \V(\gamma_{k_j}(\tau))\,d\tau$$
to arrive at
$$ \gamma(t) - \gamma(0)\,=\,  \int_{0}^{t}\!\nabla \V(\gamma(\tau))\,d\tau.$$
Differentiating, we get
$$\frac{d \gamma}{dt}\,=\,\nabla \V(\gamma(t)),$$
which means that the limit curve is a streamline  and $\gamma(0) = a$.

This was for a subsequence, but using the uniqueness theorem (Theorem \ref{uniqueness})
one can deduce that also the full sequence $\gamma_k$ converges.\end{proof}

  \begin{cor}\label{extra} Suppose that a streamline $\gamma$ joins the  points $a_0$ and $b_0$ in $G$, where $a_0$ is on the lower level, i.e. $\V(a_0) \leq \V(b_0).$  If
    $$|\nabla\V(b_0)|\,  > \,|\nabla \V(a_0)|,$$
    then there is a neighborhood of $a_0$ such that every streamline starting there joins the streamline  $\gamma$   before reaching the level curve of  $b_0$.
  \end{cor}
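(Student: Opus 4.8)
The plan is to localize the argument of Theorem~\ref{Cl} by replacing the role of $\Gamma$ (where the gradient is large) with a short arc of the level curve through $b_0$, and the role of $\xi_0 \in \partial\Omega$ (where the gradient is small) with a short arc of the level curve through $a_0$. First I would set $\lambda = \V(a_0) \le \mu = \V(b_0)$ and use continuity of $\nabla\V$ to pick $\varepsilon>0$ with $2\varepsilon < |\nabla\V(b_0)| - |\nabla\V(a_0)|$, then choose a small arc $A$ of $\{\V=\lambda\}$ centered at $a_0$ on which $|\nabla\V| < |\nabla\V(a_0)| + \varepsilon$, and a small arc $B$ of $\{\V=\mu\}$ centered at $b_0$ on which $|\nabla\V| > |\nabla\V(b_0)| - \varepsilon$. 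The neighborhood of $a_0$ in the statement will be (an open subset of $G$ whose intersection with $\{\V=\lambda\}$ is) a possibly shrunken sub-arc $A'\subset A$.

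The key geometric input is that the streamline $\gamma$ from $a_0$ to $b_0$ maps $A$ into a neighborhood of $b_0$ in a controlled way: by Proposition~\ref{ascoli}, the map sending an initial point on $\{\V=\lambda\}$ to the point where its streamline first meets $\{\V=\mu\}$ is continuous, and it sends $a_0$ to $b_0$. Hence there is a sub-arc $A'\subset A$ around $a_0$ whose streamlines all cross the level $\{\V=\mu\}$ (if at all) inside the arc $B$. Now suppose some streamline $\sigma$ starting at a point $x_2 \in A'$ reaches the level $\{\V=\mu\}$ without joining $\gamma$; let $x_1 = a_0$, let $y_1 = b_0$, and let $y_2$ be the point where $\sigma$ meets $\{\V=\mu\}$, which lies in $B$. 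The four arcs — the sub-arc of $A$ between $x_1$ and $x_2$, the streamline $\gamma$ from $x_1$ to $y_1$, the sub-arc of $B$ between $y_1$ and $y_2$, and the streamline $\sigma$ from $x_2$ to $y_2$ — bound a curved quadrilateral $D \subset\subset G$, so Lemma~\ref{speedlevel} applies and gives $\|\nabla\V\|_{\infty,\overline{y_1y_2}} \le \|\nabla\V\|_{\infty,\overline{x_1x_2}}$, i.e. $|\nabla\V(b_0)| - \varepsilon \le |\nabla\V(a_0)| + \varepsilon$, contradicting the choice of $\varepsilon$. Therefore every streamline starting in $A'$ must join $\gamma$ before reaching the level of $b_0$.

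The main obstacle is making the curved quadrilateral genuinely a Jordan domain with Lipschitz boundary so that Lemma~\ref{speedlevel} is legitimately applicable — one must ensure the four arcs do not cross each other prematurely. Since by Theorem~\ref{uniqueness} ascending streamlines cannot cross (they can only meet and merge), and we are assuming $\sigma$ has not yet merged with $\gamma$ below level $\mu$, the arcs of $\gamma$ and $\sigma$ are disjoint except at their starting points on $A$; the level arcs meet the streamlines transversally since $\nabla\V\ne 0$; and shrinking $A'$ keeps everything within a small ball where $\V$ has convex level curves, so no self-intersections occur. A minor point is that the quadrilateral should be taken on the side where $\V$ increases from $A$ to $B$, so that the outer normal along $B$ points in the direction $+\nabla\V/|\nabla\V|$ and along $A$ in the direction $-\nabla\V/|\nabla\V|$, exactly as in the proof of Lemma~\ref{speedlevel}. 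Once these topological details are in place, the estimate is immediate and the corollary follows.
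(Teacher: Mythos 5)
Your proposal is correct and follows essentially the same route as the paper: continuity of $\nabla\V$ to propagate the strict gradient inequality to neighborhoods of $a_0$ and $b_0$, Proposition~\ref{ascoli} to force streamlines starting near $a_0$ to hit the level of $b_0$ near $b_0$, and then the curved-quadrilateral comparison to derive a contradiction if no merging occurs. The only (cosmetic) difference is that you invoke Lemma~\ref{speedlevel} directly, whereas the paper cites Theorem~\ref{Cl}, whose proof is that very lemma; your extra care about the quadrilateral being a legitimate Lipschitz domain is a reasonable elaboration of a point the paper leaves implicit.
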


  \bigskip

  \begin{proof} By continuity, we can find a neighborhood of $a_0$ and a neighborhood of $b_0$ such that the strict inequality above holds extended to the neighborhoods. Consider a sequence of points $a_k$ on the level curve of $a_0$ such that $a_k \to a_0$. By Proposition  \ref{ascoli} the streamlines $\gamma_k$ starting at $a_k$ converge uniformly to $\gamma$. This implies  that when  the index $k$ is big enough, the streamline starting at $a_k$ must reach the level of $b_0$ at a point inside  the upper neighborhood.  By Theorem \ref{Cl} this is possible only if the streamline has joined $\gamma$ already before reaching the upper level. (It means that all these streamlines pass via the point $b_0$.) \end{proof}

  \begin{proof}[~Proof of Theorem \ref{single}.] We may assume that $\Gamma = \{0\}$ and $\mathrm{dist}(\Gamma, \partial \Omega) = 1$ so that
  $$\lim_{x \to 0}|\nabla \V(x)|\,=\,1$$
  by  Theorem \ref{beta} and its Corollary. With this normalization $B =B(0,1)$ is the largest disk centered at $0$ which is comprised in $\Omega$. If $B \neq \Omega$, we can find a point $\xi \in \partial \Omega$ such that $\xi \not \in \overline{B}$.  Consider the streamline $x =x(t)$ from $\xi$ to the origin.
By Lemma \ref{nonzero} $|\nabla \V| \leq 1$.   We have two cases.

  If $|\nabla \V(x(t^*))| < 1$ at some point $x^* =x(t^*)$ then there is a neighborhood $U^*$ of $x^*$ where $|\nabla \V| \leq \alpha < 1$ for some suitable $\alpha$. Given a small $\varepsilon > 0$, there is a neighborhood of the top $0$ in which $|\nabla \V| > 1-\varepsilon$. If $ \varepsilon$ is so small that
  $\alpha < 1- \varepsilon$, the quadrilateral described in Lemma \ref{speedlevel} cannot exist, since inequality (\ref{ab}) is violated. This means that any two streamlines passing via the neighborhood $U^*$ must join before reaching the top.

  We are left with the case $|\nabla \V(x(t)| \equiv 1.$ Using the arclength
  $$s\,=\,\int_0^t\!|\nabla \V(x(\tau))|\,d\tau,\qquad \frac{ds}{dt}\,=\,|\nabla \V(x(t))|$$
  as parameter we see that
  \begin{align*}
    1\,=\,\V(0)-\V(\xi)\,=&\, \int_0^{T}\frac{d\V(x(t))}{dt}\,dt\,=\, \int_0^{T}\!\langle\nabla \V(x(t)),\frac{dx(t)}{dt}\rangle\,dt\\
   =&\, \int_0^{T}\!|\nabla \V(x(t))|^2\,dt \,=\, \int_0^s\!\overbrace{|\nabla \V(x(s))|}^{\equiv 1}\,ds \,=\,s
  \end{align*}
  Thus the length of the streamline from $\xi$ to $0$ is $= 1$. But that violates the requirement that $|\xi-0| > 1$. Therefore this second case is impossible.

  The proof reveals that all streamlines starting outside the inscribed disk $\overline{B}$ have Cl-points.\end{proof}

\begin{proof}[~Proof Theorem \ref{hrthm}.] The proofs follows the same lines as the proof of Theorem \ref{single}.  The only difference is that we use Proposition \ref{HRprop} instead of Proposition \ref{singledist}.
\end{proof}
  \section{The Streamlines in a Square}
  In this section, $\Omega$ is the square defined by $$-1<x_1<1,\,\,-1<x_2<1$$ and $\Gamma$ is the origin $(0,0).$ Thus $\V(0,0) = 1$. In this case the $\infty$-potential 
  $\V$ can be defined in the whole plane by reflection through the sides of the square. (The principle is the same as the Schwartz reflecion for harmonic functions.) The resulting function is $\infty$-harmonic except at the isolated points $(2m,2n),\,\, m,n=0,\pm1,\pm2,...$ The gradient
  $\nabla \V$ is now continuous except at the aforementioned points. Moreover, at the corners
  $\nabla \V(\pm1,\pm 1) = 0$
   since $\V=0$ on the sides of the square.

  Comparison  yields
  $$1-|x|\,\leq\,\V(x)\,\leq\,\mathrm{dist}(x,\partial \Omega)$$
  so that $\V$ is a linear function on the medians (= the coordinate axes).

  If $x_p = x_p(t)$ is a streamline for the $p$-harmonic function $V_p$ with the same boundary values as $\V$ so that $V_p \to \V$ as $p \to \infty$, then
  \begin{align*}
    \frac{d\,}{dt}V_p(x_p(t))\,=&\,\langle\nabla V_p,\frac{dx_p}{dt}\rangle \,=\,|\nabla V_p(x_p(t))|^2 \\
    \frac{d^2}{dt^2}V_p(x_p(t)) \,=&\, \frac{d\,}{dt}|\nabla
    V_p(x_p(t))|^2\,=\,2\,\Delta_{\infty}\!V_p(x_p(t))\\ \nonumber
    =&\,-\frac{1}{p-1}|\nabla V_p(x_p(t))|^2\Delta V_p\,\geq0, \nonumber
  \end{align*}
  since $\Delta V_p \leq 0$ (superharmonic) by Lewis's theorem. Thus the functions
  $$t \mapsto V_p(x_p(t))$$
  are convex. Unfortunately, the streamlines usually move as $p\to \infty,$ making the control of the process difficult.
  However, the diagonals are streamlines for all $p$. Thus the limit function
  $$\V(t,t) \quad \text{is convex when} \quad   -1 \leq t \leq 0$$
  on the diagonal from $(-1,-1)$ to $(0,0)$. Since the limit $V_{\infty}(t,t)$ has a continuous derivative with respect to $t$, it follows by Theorem 25.7 in \cite{R}, 
  that on the diagonal even the derivatives of $V_p$ converge uniformly.\footnote{Unfortunately, the uniform convergence $\nabla V_p \to \nabla\V$ is not known to us.} It follows that
  the speed $|\nabla \V|$ is non-decreasing along the diagonal.

  We sum up a few properties:
  \begin{enumerate}
    \item From each point on the boundary $\partial \Omega$ a unique streamline starts and terminates at the origin. Through each point there passes at least one streamline.
  \item A streamline has a continuous tangent.
  \item The diagonals and medians are streamlines.
  \item No streamline can join the medians.
  \item The speed $|\nabla\V|$ is non-decreasing on the diagonals.\footnote{It is likely that this holds on all streamlines.}
   \item There are infinitely many Cl-points near the corners.
    \item There are infinitely many Cl-points near the origin.
    \item There are infinitely many Cl-points along any streamline except the medians.
  \end{enumerate}
  This can be directly deduced from the previous results except for the three last points, which require some further explanation.

\begin{proof}[~Proof of 6)] The gradient is zero at the corners and the gradient is non-zero at all interior points.  Therefore there must be infinitely many points $a_0$ and $b_0$ near the corners satisfying the assumptions of Corollary \ref{extra}. This implies that there are infinitly many Cl-points near the corners.
\end{proof}

\begin{proof}[~Proof of 7)] We prove that in each disk around the origin, there is at least one Cl-point. The result follows from this. We assume towards a contradiction that there is $c\in (0,1)$ such that the set $\{V_\infty>c\}$ does not contain any such points. We apply Theorem \ref{single} to the restriction of $w=(\V-c)/(1-c)$ to the set $\{\V\geq c\}$ to conclude that the set $\{\V>c\}$ is a ball $B$.  In particular, $|\nabla \V|=1$ in $B$. 

Denote by $y_1$ the intersection of $B$ and the lower right diagonal. Let $x_1$ be the closest point to the  midpoint $(0,-1)$ of the lower side, such that the streamline starting at $x_1$ passes through $y_1$.\footnote{Here the notation $x=(x_1,x_2)$ is abandoned, the subindices referring to different points.} We have two alternatives: 1) $x_1$ is the corner point $(1,-1)$ and 2) $x_1$ is not the corner point (it cannot be the midpoint).

In the case of 1), any streamline starting at a point $x_2$ to the left of the corner, intersects $\partial B$ at a point $y_2\neq y_1$ which is not on the diagonal. Since we may take $x_2$ as close as we wish to the diagonal, we may assume $|\nabla V_\infty|<\frac{1}{2}$ on the line between $x_1$ and $x_2$. Moreover, on the level set joining $y_1$ and $y_2$ (= the circle $\partial B$), we have $|\nabla V_\infty|=1$. By applying Lemma \ref{speedlevel} to the pair of points $x_1,x_2$ and $y_1,y_2$, we obtain 
$$
\|\nabla V_\infty\|_{\infty,\overline{x_1\,x_2}}\,\geq\, \|\nabla V_\infty\|_{\infty,\overline{y_1\,y_2}}\, =\,  1, 
$$
which is a contradiction.

In the case of 2), let $x_2$ be a point to the left of $x_1$ and $y_2$ the corresponding point on $\partial B$. By definition, $y_2\neq y_1$. Take $z_1$ to be a point on the streamline from $x_1$ to $y_1$. Let $z_2$ be a point on the same level line  as $z_1$ and on the streamline between $x_2$ to $y_2$. By Lemma \ref{speedlevel} applied to the pair of points $y_1,y_2$ and $z_1,z_2$, we obtain that 
$$
\|\nabla V_\infty\|_{\infty,\overline{z_1\,z_2}}\geq 1.
$$
Since the pair $z_1,z_2$ is arbitrary and since we may choose $x_2$ arbitrary close to $x_1$, this implies that $|\nabla V_\infty|=1$ along the streamline starting at $x_1$. Since the distance between $x_2$ and the origin is strictly larger than 1, this is a contradiction.
\end{proof}

\begin{proof}[~Proof of 8)] Let $x$ be a boundary point which is not a midpoint of a side. Then $|x|>1$. Therefore, along any streamline starting at $x$, there must be a point $y$ where $|\nabla V_\infty|<1$. Since $|\nabla V_\infty|$ is continuous along the streamline, there must be infinitely many points $a_0$ and $b_0$ along this streamline satisfying the assumptions of Corollary \ref{extra} and therefore there are infinitly many Cl-points along this streamline.
\end{proof}
\begin{figure}[h!tp]
\begin{center}
\includegraphics[scale=0.09]{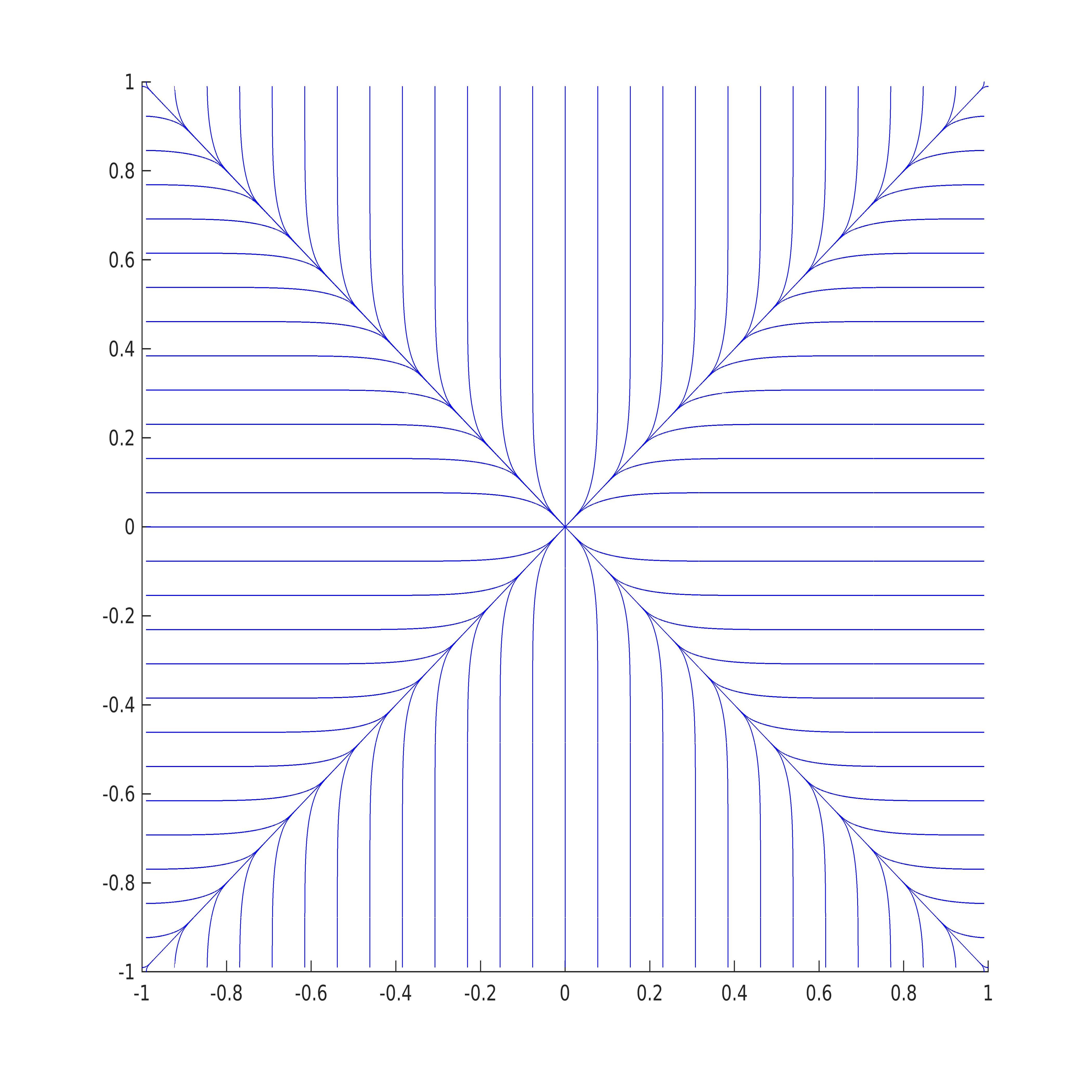}
\caption{The streamlines of $V_\infty$ when $\Omega$ is the square $-1<x_1<1,\,\,-1<x_2<1$.}
\end{center}
\end{figure}
We conjecture that every streamline except the medians joins a diagonal before reaching the midpoint and that the only Cl-points are the points on the diagonals. This is also suggested by Figure 1.

 \bigskip
  \paragraph{Epilogue.}
One may wonder whether $|\nabla \log \V| \geq 1$ in the square. This would show that $\V$ is the same function as the $\infty$-Ground State described in  Section 4 of \cite{JLM2}. This is also suggested by numerics.

  \bigskip
\paragraph{Acknowledgments:} Erik Lindgren was supported by the Swedish Research Council, grant no. 2012-3124 and 2017-03736. Peter Lindqvist was supported by The Norwegian Research Council, grant no. 250070 (WaNP). We thank the two referees for carefully reviewing this work and for pointing out a gap in the proof of Proposition \ref{fundamental}.

\bigskip
\noindent {\textsf{Erik Lindgren\\  Department of Mathematics\\ Uppsala University\\ Box 480\\
751 06 Uppsala, Sweden}  \\
\textsf{e-mail}: erik.lindgren@math.uu.se\\

\noindent \textsf{Peter Lindqvist\\ Department of
   Mathematical Sciences\\ Norwegian University of Science and
  Technology\\ N--7491, Trondheim, Norway}\\
\textsf{e-mail}: peter.lindqvist@ntnu.no

\end{document}